\begin{document}

\title*{On Wavelet-Galerkin Methods for Semilinear Parabolic Equations with Additive Noise}
% Use \titlerunning{Short Title} for an abbreviated version of
% your contribution title if the original one is too long
\author{Mih\'aly Kov\'acs, Stig Larsson and Karsten Urban}
% Use \authorrunning{Short Title} for an abbreviated version of
% your contribution title if the original one is too long
\institute{Mih\'aly Kov\'acs \at University of Otago,
Dept. of Mathematics and Statistics,
P.O. Box 56, Dunedin, New Zealand\\ \email{mkovacs@maths.otago.ac.nz}
\and Stig Larsson \at Chalmers University of Technology,
Mathematical Sciences,
SE--412 96 Gothenburg, Sweden\\ \email{stig@chalmers.se}
\and Karsten Urban \at Ulm University,
Inst.\ for Numerical Mathematics,
Helmholtzstr.\ 18,
DE--89069 Ulm, Germany\\ \email{karsten.urban@uni-ulm.de}}
%
% Use the package "url.sty" to avoid
% problems with special characters
% used in your e-mail or web address
%
\maketitle

%\abstract*{Each chapter should be preceded by an abstract (10--15 lines long) that summarizes the content. The abstract will appear \textit{online} at \url{www.SpringerLink.com} and be available with unrestricted access. This allows unregistered users to read the abstract as a teaser for the complete chapter. As a general rule the abstracts will not appear in the printed version of your book unless it is the style of your particular book or that of the series to which your book belongs.
%Please use the 'starred' version of the new Springer \texttt{abstract} command for typesetting the text of the online abstracts (cf. source file of this chapter template \texttt{abstract}) and include them with the source files of your manuscript. Use the plain \texttt{abstract} command if the abstract is also to appear in the printed version of the book.}

\abstract{We consider the semilinear stochastic heat equation perturbed by
additive noise.  After time-discretization by Euler's method the
equation is split into a linear stochastic equation and a non-linear
random evolution equation.  The linear stochastic equation is
discretized in space by a non-adaptive wavelet-Galerkin method.
This equation is solved first and its solution is substituted into
the nonlinear random evolution equation, which is solved by an
adaptive wavelet method.  We provide mean square estimates for the
overall error.}

%--------------------------------------------
\newcommand{\bbE}{\mathbb{E}}
\newcommand{\N}{\mathbb{N}}
\newcommand{\R}{\mathbb{R}}
\newcommand{\J}{\mathcal{E}}
%--------------------------------------------
\newcommand{\bd}{\mathbf{d}}
\newcommand{\boldf}{\mathbf{f}}
\newcommand{\bg}{\mathbf{g}}
\newcommand{\bu}{\mathbf{u}}
\newcommand{\bv}{\mathbf{v}}
\newcommand{\bw}{\mathbf{w}}
%--------------------------------------------
\newcommand{\bA}{\mathbf{A}}
\newcommand{\bB}{\mathbf{B}}
\newcommand{\bD}{\mathbf{D}}
\newcommand{\bE}{\mathbf{E}}
\newcommand{\bI}{\mathbf{I}}
\newcommand{\bK}{\mathbf{K}}
\newcommand{\bM}{\mathbf{M}}
\newcommand{\bR}{\mathbf{R}}
\newcommand{\bQ}{\mathbf{Q}}
%--------------------------------------------
\newcommand{\cA}{\mathcal{A}}
\newcommand{\cC}{\mathcal{C}}
\newcommand{\cD}{\mathcal{D}}
\newcommand{\cF}{\mathcal{F}}
\newcommand{\cI}{\mathcal{I}}
\newcommand{\cJ}{\mathcal{J}}
\newcommand{\cP}{\mathcal{P}}
\newcommand{\cR}{\mathcal{R}}
\newcommand{\cT}{\mathcal{T}}
%--------------------------------------------

\newcommand{\HS}{\mathrm{HS}}
\newcommand{\dd}{{\mathrm{d}}}
\newcommand{\ee}{{\mathrm{e}}}
\newcommand{\W}{W_A}
\newcommand{\ens}[1]{\bbE \|#1\|^2}
\newcommand{\Ens}[1]{\bbE \Big\|#1\big\|^2}
\newcommand{\shs}[1]{\hs{#1}^2}
\newcommand{\hs}[1]{\|#1\|_{\HS}}

\newcommand{\norm}[2][]{\|{#2}\|_{{#1}}}
\newcommand{\snorm}[2][]{|{#2}|_{{#1}}}
\newcommand{\Snorm}[2][]{\Big|{#2}\Big|_{{#1}}}
\newcommand{\Norm}[2][]{\Big\|{#2}\Big\|_{{#1}}}
\newcommand{\abs}[1]{|#1|}
\newcommand{\inner}[2][]{\langle{#2}\rangle_{{#1}}}
\newcommand{\Inner}[2][]{\Big\langle{#2}\Big\rangle_{{#1}}}
\newcommand{\Supp}{\operatorname{supp}}
%\numberwithin{equation}{section}

% --------------------------------------------------------------------------
\section{Introduction}\label{sec:1}

We consider the following semilinear parabolic problem with additive
noise,
\begin{equation}\label{eq:1}
  \begin{aligned}
&\dd u -\nabla\cdot(\kappa \nabla u)\, \dd t
  = f(u)\,\dd t +\dd W,\quad && x\in\cD,\ t\in (0,T),\\
&u=0,&&x\in\partial\cD,\  t\in (0,T),\\
&u(\cdot,0)=u_0, && x\in\cD .
  \end{aligned}
\end{equation}
Here $T>0$, $\cD\subset\R^d$, $d=1,2,3$, is a convex polygonal domain or a domain with
smooth boundary $\partial\cD$, and $\{W(t)\}_{t\ge 0}$ is an
$L_2(\cD)$-valued $Q$-Wiener process on a filtered probability space
$(\Omega, \cF, \mathbb{P}, \{\cF_t\}_{t\ge 0})$ with respect to the
normal filtration $ \{\cF_t\}_{t\ge 0}$.  We use the notation
$H=L_2(\cD)$, $V=H^1_0(\cD)$ with $\|\cdot\|=\|\cdot\|_H$ and
$(\cdot,\cdot)=(\cdot,\cdot)_H$. Moreover, $A\colon V\to V'$ denotes
the linear elliptic operator $Au=-\nabla\cdot(\kappa\nabla u)$ for
$u\in V$ where $\kappa(x)>\kappa_0>0$ is smooth. As usual we consider the bilinear form $a\colon V\times
V\to\mathbb{R}$ defined by $a(u,v) = \langle Au,v\rangle$ for $u,v\in
V$, and $\langle\cdot,\cdot\rangle$ denotes the duality pairing of
$V'$ and $V$.  We denote by $\ee^{-tA}$ the analytic semigroup in $H$
generated by the realization of $-A$ in $H$ with $D(A)=H^2(\mathcal D)\cap H^1_0(\mathcal{D})$.  Finally, $f\colon H\to H$ is a nonlinear function,
which is assumed to be globally Lipschitz continuous, i.e., there
exists a constant $L_f$ such that
\begin{equation}\label{Lip}
	\|f(u)-f(v)\|\le L_f \|u-v\|,\quad u,v\in H.
\end{equation}

It is well known that our assumptions on $A$ and on the spatial domain
$\mathcal{D}$ implies the existence of a sequence of nondecreasing
positive real numbers $\{\lambda_k\}_{k\geq 1}$ and an orthonormal
basis $\{e_k\}_{k\geq 1}$ of $H$ such that
\begin{equation*}\label{eq:spectral}
Ae_k = \lambda_k e_k, \quad \lim_{k\rightarrow +\infty} \lambda_k = +\infty.
\end{equation*}
Using the spectral functional calculus for $A$ we introduce the fractional powers $A^s$, $s
\in \mathbb{R}$, of $A$ as
\begin{equation*}\label{eq:fp}
A^s v=\sum_{k=1}^{\infty}\lambda_k^s(v,e_k)e_k,\quad
D(A^s)=\Big\{v\in H:\|A^sv\|^2=\sum_{k=1}^{\infty}\lambda_k^{2s}(v,e_k)^2<\infty\Big\}
\end{equation*}
and spaces $\dot{H}^\beta=D(A^{\beta/2})$ with norms
$\norm[\beta]{v}=\norm{A^{\beta/2} v}$. It is classical that if $0\le
\beta < 1/2$, then $\dot{H}^\beta=H^\beta$ and if $1/2<\beta\le 2$,
then $\dot{H}^\beta=\{u\in H^\beta:u|_{\partial \mathcal{D}}=0\}$,
where $H^\beta$ denotes the standard Sobolev space of order $\beta$.
We also use the spaces $L_{2}( \Omega, \dot{H}^{\beta})$ with the mean
square norms $\norm[L_{2}( \Omega,
\dot{H}^{\beta})]{v}=\big(\bbE[\norm[\beta]{v}^2]\big)^{\frac12}$.

We assume for some $\beta\ge0$ that
\begin{equation}  \label{eq:HSassumption}
\norm[\HS]{A^{\frac{\beta-1}{2}}Q^{\frac12}}<\infty, \quad
u_0\in L_2(\Omega, \dot{H}^\beta).
\end{equation}
Here $Q$ is the covariance operator of $W$ and $\norm[\HS]{\cdot}$
denotes the Hilbert-Schmidt norm. The Hilbert-Schmidt condition in
\eqref{eq:HSassumption} can be viewed as a regularity assumption on
the covariance operator $Q$. In particular, it holds with $\beta=1$ if
$Q$ is a trace class operator and with $\beta<1/2$ if $Q=I$ and
$d=1$. More generally, it holds if
$\sum_{k=1}^{\infty}\lambda_k^{-\alpha}<\infty$ (thus $\alpha>d/2$)
and $A^{\beta+\alpha-1}Q$ is a bounded linear operator on $H$ (see,
for example, \cite[Theorem 2.1]{KLLweak}).

It is known (\cite{DapZab}, \cite[Lemma 3.1]{KLL}) that if \eqref{Lip}
and \eqref{eq:HSassumption} hold, then \eqref{eq:1} has a unique mild
solution, which is defined to be the solution of the fixed point
equation
\begin{equation}  \label{eq:01}
  \begin{aligned}
u(t)
&= \ee^{-tA}u_0
+\int_0^t\ee^{-(t-s)A}f(u(s))\,\dd s
+\int_0^t\ee^{-(t-s)A}\,\dd W(s).
\end{aligned}
\end{equation}
This naturally splits the solution as $u=v + w$, where $w$ is
a stochastic convolution,
\begin{equation} \label{eq:03}
w(t)= \int_0^t\ee^{-(t-s)A}\,\dd W(s),
\end{equation}
which is the solution of
\begin{equation} \label{eq:02}
\dd w +Aw\,\dd t=\dd W, \ 0< t\le T;\quad
 w(0)=0,
\end{equation}
and $v$ is the solution of the random evolution equation
\begin{equation}  \label{eq:04}
\dot{v} +Av=f(v+w) , \ 0< t\le T;\quad v(0)=u_{0}.
\end{equation}

Our approach will be to first compute $w$ and then to insert it into
\eqref{eq:04} which we then solve for $v$.  Finally, $u=v+w$.  For the
numerical solution we use Rothe's method, where we first discretize
with respect to time and then discretize the resulting elliptic
problems with wavelet methods.

Thus, we fix a time step $\tau>0$, set $t_n:= n\tau$ with $t_N=T$, and
consider a backward Euler discretization of \eqref{eq:1}.  With
$u^n\approx u(t_n)$ and increments $\Delta W^n=W(t_n)-W(t_{n-1})$ this
reads
\begin{equation}
\label{Eq:2}
u^n + \tau Au^n = u^{n-1} + \tau f(u^n) + \Delta W^n,\ 1\le n\le N;
\quad u^0=u_0.
\end{equation}
Then we decompose $u^n= v^n + w^n$ to get time-discrete
versions of \eqref{eq:02} and \eqref{eq:04}:
\begin{subequations}
\label{Eq:3}
  \begin{align}
 w^n + \tau A w^n&=w^{n-1}+\Delta W^n,\ &1\le n\le N;\quad w^0=0,& \label{Eq:3a} \\
   v^n + \tau Av^n&=v^{n-1}+\tau f(v^n + w^n),\ &1\le n\le N;\quad v^0=u_0.&\label{Eq:3b}
 \end{align}
\end{subequations}
This allows us to solve the linear problem \eqref{Eq:3a} first and
use the result as an input for the nonlinear problem \eqref{Eq:3b}.
Moreover, the stochastic influence in \eqref{Eq:3b} is smoother than
in \eqref{Eq:3a}, which allows us to use fast nonlinear solvers.

We consider now the spatial discretization of \eqref{Eq:3}.  To this
end, let $S_J$ be a multiresolution space of order $m$ (see \eqref{eq:m} for the definition) and let
$\{w_J^n\}_{n=0}^{N}\subset S_J$ be the corresponding Galerkin
approximation of $\{w^n\}_{n=0}^{N}$, i.e.,
\begin{equation}
  \label{eq:multires}
   w_J^n + \tau A_J w_J^n=w_J^{n-1}+P_J\Delta W^n,\ 1\le n\le N;\quad
   w_J^0=0.
\end{equation}
We refer to Sect.\ \ref{sec:multires} for further details.  We enter this
approximation instead of $w^n$ into \eqref{Eq:3b}.  The corresponding
equation reads
\begin{align}
\label{Eq:5}
\bar{v}^n + \tau A \bar{v}^n
=  \bar{v}^{n-1}+\tau f( \bar{v}^n  + w_J^n),
\ 1\le n\le N; \quad \bar{v}^0=u_0.
\end{align}
For each $\omega\in\Omega$ and for each $n\ge1$ the nonlinear equation
in \eqref{Eq:5} is solved by an adaptive wavelet algorithm to yield an
approximate solution $v_\varepsilon^n$ with tolerance
$\varepsilon_n$. Note that we use the same tolerance for each
$\omega$.  More precisely, denoting $\bar{v}^n=E_n(\bar{v}^{n-1})$,
where $E_n=(I+\tau A-\tau f(\cdot+w_J^n))^{-1}$ is the nonlinear
one-step operator from \eqref{Eq:5}, we assume that
$v_\varepsilon^n=\tilde{E}_n(v_\varepsilon^{n-1})$, where $\tilde{E}_n$
is an approximation of $E_n$ such that
\begin{align}
\label{Eq:5b}
\| E_n(v)-\tilde{E}_n(v) \| \leq \varepsilon_n,
\quad 1\le n\le N, \ v\in H.
\end{align}
The output of the computation will then be the sequence
\begin{equation} \label{output}
  u_{\varepsilon}^n=v_{\varepsilon}^n+w_J^n, \quad 0\le n\le N.
\end{equation}
The total error is $u_\varepsilon^n-u(t_n) =
(v_\varepsilon^n-\bar{v}^n) +(\bar{v}^n-v^n) +(w_J^n-w^n)
+(u^n-u(t_n)) $.  The contributions are bounded as follows, where the
constants $C$ depend on $\| u_{0} \|_{L_{2}( \Omega,
  \dot{H}^{\beta})}$, $\| A ^{\frac{\beta -1}{2}}Q^{\frac12}\|_{\HS}$,
and $T$, referring to assumption \eqref{eq:HSassumption}.  We also
assume $\tau L_f<\frac12$.

First, in Sect. \ref{Sec:2.2b}, an adaptive wavelet algorithm is
described which realizes \eqref{Eq:5b}. In Theorem
\ref{thm:complexity}, we also analyze the computational effort of the
algorithm applied to \eqref{Eq:5}.  We conclude the section by showing
that
\begin{align}
\label{Eq:5c}
\max_{0\le t_n\le T}
\norm[L_2(\Omega, H)]{v_\varepsilon^n-\bar{v}^n}
\le C\sum_{n=1}^{N} \varepsilon_n.
\end{align}

The multiresolution approximation of the time-discrete stochastic
convolution is studied in Sect.\ \ref{sec:multires} and Theorem
\ref{Thm:wn} shows that
\begin{equation}  \label{eq:08}
 \max_{0\le t_n\le T}
\norm[L_2(\Omega, H)]{w_J^n-w^n}
 \le C\, 2^{-J\min(\beta,m)}.
\end{equation}

In Sect.\ \ref{sec:time}, Theorem \ref{Thm:tme}, we study the
time-discretization error and prove that
\begin{equation}  \label{eq:06}
\max_{0\le t_n\le T}
\norm[L_2(\Omega, H)]{u^n-u(t_n)}
\le C\,\tau^{\frac\beta2}, \quad \text{if $0\le\beta<1$}.
\end{equation}

Finally, in Sect.\ \ref{sec:nonlinear}, we analyze the perturbation of the
nonlinear term and obtain that
\begin{equation} \label{eq:07}
 \max_{0\le t_n\le T}
 \norm[L_2(\Omega, H)]{\bar{v}^n-v^n}
 \le C  \max_{0\le t_n\le T}\norm[L_2(\Omega, H)]{w_J^n-w^n}.
\end{equation}

Therefore, our main result is the following.

\begin{theorem}\label{thm:main}
  Assume \eqref{eq:HSassumption} for some $\beta\ge0$.  Let
  $\{w_J^n\}_{n=0}^{N}\subset S_J$ be computed by a multiresolution
  Galerkin method of order $m$ and $\{v_\varepsilon^n\}_{n=0}^{N}$ by
  an adaptive wavelet method with tolerances $\varepsilon_n$.  Then
  for $0\le\beta < 1$, the total error in \eqref{output} is bounded by
\begin{align*}
\max_{0\le t_n\le T}
\norm[L_2(\Omega, H)]{u_\varepsilon^n-u(t_n)}
\le C\,\tau^{\frac\beta2} + C\, 2^{-J\min(\beta,m)} + C\sum_{n=1}^{N}\varepsilon_n,
\end{align*}
for $\tau L_f<\frac12$, where $C=C(\| u_{0} \|_{L_{2}( \Omega,
  \dot{H}^{\beta})}, \| A ^{\frac{\beta -1}{2}}
Q^{\frac12}\|_{\HS},T)$. If $\beta\ge1$, then first term is replaced by
$C_{\delta}\tau^{\frac12-\delta}$, for any $\delta>0$.
\end{theorem}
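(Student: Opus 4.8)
The plan is to reduce the theorem to the four error bounds already established in the excerpt. First I would use the identity $u_\varepsilon^n = v_\varepsilon^n + w_J^n$ from \eqref{output} together with $u^n = v^n + w^n$ to obtain the exact decomposition
\begin{equation*}
u_\varepsilon^n - u(t_n) = (v_\varepsilon^n - \bar{v}^n) + (\bar{v}^n - v^n) + (w_J^n - w^n) + (u^n - u(t_n)),
\end{equation*}
exactly as indicated before the statement. Applying the triangle inequality in $L_2(\Omega,H)$ and then taking $\max_{0\le t_n\le T}$ reduces everything to bounding the four maxima separately.

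Next I would insert the known estimates term by term. By \eqref{Eq:5c} the first contribution is at most $C\sum_{n=1}^{N}\varepsilon_n$; by \eqref{eq:08} the third is at most $C\,2^{-J\min(\beta,m)}$; and by \eqref{eq:07} the second is bounded by $C\max_{0\le t_n\le T}\norm[L_2(\Omega,H)]{w_J^n-w^n}$, hence also by $C\,2^{-J\min(\beta,m)}$, and may be absorbed into the third. For $0\le\beta<1$, \eqref{eq:06} bounds the fourth contribution by $C\,\tau^{\frac\beta2}$. Summing the three resulting terms and keeping track that every constant depends only on $\norm[L_2(\Omega,\dot{H}^{\beta})]{u_0}$, $\norm[\HS]{A^{\frac{\beta-1}{2}}Q^{\frac12}}$ and $T$ (with $\tau L_f<\frac12$) gives the stated bound for $0\le\beta<1$.

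For $\beta\ge1$ the only estimate that does not apply verbatim is \eqref{eq:06}, which is stated for $0\le\beta<1$. Here I would observe that assumption \eqref{eq:HSassumption} is monotone in the regularity index: for $0\le\beta'\le\beta$ one has $\dot{H}^{\beta}\hookrightarrow\dot{H}^{\beta'}$, so $u_0\in L_2(\Omega,\dot{H}^{\beta'})$, and
\begin{equation*}
\norm[\HS]{A^{\frac{\beta'-1}{2}}Q^{\frac12}} = \norm[\HS]{A^{\frac{\beta'-\beta}{2}}A^{\frac{\beta-1}{2}}Q^{\frac12}} \le \norm{A^{\frac{\beta'-\beta}{2}}}\,\norm[\HS]{A^{\frac{\beta-1}{2}}Q^{\frac12}} < \infty,
\end{equation*}
since $A^{\frac{\beta'-\beta}{2}}$ is bounded on $H$ when $\beta'\le\beta$. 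Applying \eqref{eq:06} with $\beta'=1-2\delta$ for any $\delta\in(0,\tfrac12)$ then yields $\max_{0\le t_n\le T}\norm[L_2(\Omega,H)]{u^n-u(t_n)}\le C_\delta\,\tau^{\frac12-\delta}$, while the other three bounds are unaffected; this gives the asserted form of the estimate when $\beta\ge1$.

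The proof itself is essentially bookkeeping with the triangle inequality, so the genuine difficulty lies in the four auxiliary results \eqref{Eq:5c}, \eqref{eq:08}, \eqref{eq:06} and \eqref{eq:07} rather than in this synthesis; within the synthesis, the only point that needs a moment's care is the $\beta\ge1$ reduction above, i.e.\ verifying that lowering the regularity index preserves the Hilbert--Schmidt condition so that \eqref{eq:06} can legitimately be invoked with an index strictly below $1$.
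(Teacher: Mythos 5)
Your proposal is correct and follows essentially the same route as the paper, which proves the theorem precisely by combining the decomposition $u_\varepsilon^n-u(t_n)=(v_\varepsilon^n-\bar{v}^n)+(\bar{v}^n-v^n)+(w_J^n-w^n)+(u^n-u(t_n))$ with the four bounds \eqref{Eq:5c}, \eqref{eq:07}, \eqref{eq:08} and \eqref{eq:06}. Your explicit verification that the case $\beta\ge1$ reduces to \eqref{eq:06} with $\beta'=1-2\delta$ (using boundedness of $A^{(\beta'-\beta)/2}$ on $H$) correctly fills in the one step the paper leaves implicit.
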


The literature on numerics for nonlinear  stochastic parabolic
problems is now rather large.  We mention, for example,
\cite{P2001} on pure time-discretization and \cite{kruse,yan} on
complete discretization based on the method of lines, where the
spatial discretization is first performed by finite elements and the
resulting finite-dimensional evolution problem is then discretized.
Wavelets have been used in \cite{Manuscript} where
the spatial approximation (without adaptivity) of
stochastic convolutions were considered.

Our present paper is a first attempt towards spatial adaptivity by
using Rothe's method together with known adaptive wavelet methods for
solving the resulting nonlinear elliptic problems.

The spatial Besov regularity of solutions of stochastic PDEs is
investigated in \cite{Dahlkeetal1,Dahlkeetal}.  The comparison of the
Sobolev and Besov regularity is indicative of whether adaptivity is
advantageous. For problems on domains with smooth or convex polygonal
boundary with boundary adapted additive noise (that is,
\eqref{eq:HSassumption} holds for $\beta$ high enough), where the
solution can be split as $u=v+w$, we expect that the adaptivity is not
needed for the stochastic convolution $w$, which then has sufficient Sobolev
regularity. We therefore apply it only to the random evolution problem
\eqref{eq:04}. Once the domain is not convex, or the boundary is not
regular, or the noise is not boundary adapted, adaptivity might pay
off also for the solution of the linear problem \eqref{Eq:3a}.

The recent paper \cite{Dahlkeetal2} is a first attempt for a more
complete error analysis of Rothe's method for both deterministic and
stochastic evolution problems.  The overlap with our present work is not
not too large, since we take advantage of special features of
equations with additive noise.

%A different approach is taken in \cite{SchwabStevenson}, which studies
%space-time adaptive wavelet methods for deterministic evolution
%problems.

%--------------------------------------------------------------------

\section{Wavelet approximation}

In this section, we collect the notation and the main properties of
wavelets that will be needed in the sequel. We refer to
\cite{Cohen:2000,Dahmen:1997,Urban:2009} for more details on wavelet
methods for PDEs. For the space discretization, let
\begin{align*}
	\Psi = \{ \psi_\lambda: \lambda\in\cJ^\Psi\},\quad
	\tilde\Psi = \{ \tilde\psi_\lambda: \lambda\in\cJ^\Psi\}
\end{align*}
be a biorthogonal basis of $H$, i.e., in particular $(\psi_\lambda,\tilde\psi_\mu)_H=\delta_{\delta,\mu}$. Here, $\lambda$ typically is an index vector $\lambda=(j,k)$ containing both the information on the level $j=|\lambda|$ and the location in space $k$ (e.g., the center of the support of $\psi_\lambda$). Note that $\Psi$ also contains the scaling functions on the coarsest level that are not  wavelets. We will refer to $|\lambda|=0$ as the level of the \emph{scaling functions}.

In addition, we assume that $\psi_\lambda\in V$, which is an assumption on the regularity (and boundary conditions) of the primal wavelets. To be precise, we pose the following assumptions on the wavelet bases:
\begin{enumerate}
    \item Regularity: $\psi_\lambda\in H^t(\cD)$, $\lambda\in\cJ^\Psi$ for all $0\le t < s_\Psi$;
    \item Vanishing moments:   $( (\cdot)^r, \psi_\lambda)_{0;\cD} = 0$,  $0\le r< m_\Psi$, $|\lambda|>0$.
    \item Locality: $\operatorname{diam}(\Supp \psi_\Lambda)\sim 2^{-|\lambda|}$.
\end{enumerate}
We assume the same properties for the dual wavelet basis with $s_\Psi$ and $m_\Psi$ replaced by $\tilde s_\Psi$ and $\tilde m_\Psi$. Note that the dual wavelet $\tilde\psi_\lambda$ does not need to be in $V$, typically one expects $\tilde\psi_\lambda\in V'$.

We will consider (often finite-dimensional) subspaces generated by (adaptively generated finite) sets of indices $\Lambda\subset\cJ^\Psi$ and
$$
\Psi_\Lambda := \{ \psi_\lambda:\lambda\in\Lambda\},\quad
S_\Lambda := \operatorname{clos} \operatorname{span}(\Psi_\Lambda),
$$
where the closure is of course not needed if $\Lambda$ is a finite set. If $\Lambda=\Lambda_J:=\{ \lambda\in\cJ^\Psi:\, |\lambda|\le J-1\}$, then $S_J :=S_{\Lambda_J}$ contains all wavelets up to level $J-1$ so that $S_J$ coincides with the multiresolution space generated by all scaling functions on level $J$, i.e.,
\begin{align}
  \label{eq:mrs}
	S_J=\mathrm{span}\,\Phi_J, \quad \Phi_J=\{ \varphi_{J,k}: k\in\cI_J\},
\end{align}
where $\cI_J$ is an appropriate index set.
%---------------------------------------------------------------------
\subsection{Adaptive wavelet methods for nonlinear variational problems}
\label{Sec:2.2b}
%--------------------------------------------------------------------
In this section, we quote from \cite{CDD-Nonlinear} the main facts on adaptive wavelet methods for solving stationary nonlinear variational problems. Note, that all what is said in this section is taken from \cite{CDD-Nonlinear}. However, we abandon further reference for easier reading.

Let $F\colon V\to V'$ be a nonlinear map. We consider the problem of finding $u\in V$ such that
%--
\begin{equation}\label{Eq:Non1}
	\langle v, R(u)\rangle := \langle v, F(u)-g\rangle = 0,\quad v\in V,
\end{equation}
%--
where $g\in V'$ is given. As an example, let $F$ be given as $\langle
v,F(u)\rangle := a(v,u) + \langle v, f(u)\rangle$ which covers
\eqref{Eq:5}. The main idea is to consider an equivalent formulation
of \eqref{Eq:Non1} in terms of the wavelet coefficients $\bu$ of the
unknown solution $u=\bu^T\Psi$. Setting
$$
	\bR (\bv) := ( \langle\psi_\lambda, R(v)\rangle )_{\lambda\in\cJ^\Psi},\quad v=\bv^T\Psi,
$$
the equivalent formulation amounts to finding $\bu\in\ell_2(\cJ^\Psi)$ such that
%--
\begin{equation}\label{Eq:Non2}
	\bR(\bv) = \mathbf{0}.
\end{equation}
%--
The next ingredient is a basic iteration in the (infinite-dimensional) space $\ell_2(\cJ^\Psi)$ and  replacing the infinite operator applications in an adaptive way by finite approximations in order to obtain a computable version. Starting by some finite $\bu^{(0)}$, the iteration reads
%--
\begin{equation}\label{Eq:Non3}
	\bu^{(n+1)} = \bu^{(n)} - \Delta\bu^{(n)},
	\quad
	\Delta\bu^{(n)} := \bB^{(n)} \bR(\bu^{(n)})
\end{equation}
%--
where the operator $\bB^{(n)} $ is to be chosen and determines the nonlinear solution method (such as Richardson or Newton). The sequence $\Delta\bu^{(n)}= \bB^{(n)} \bR(\bu^{(n)})$ (possibly infinite even for finite input $\bu^{(n)}$) is then replaced by some finite sequence $\bw_\eta^{(n)}:= {\textbf{RES}} [\eta_n, \bB^{(n)}, \bR, \bu^{(n)}]$ such that
%--
\begin{align*}
	\| \Delta\bu^{(n)} - \bw_\eta^{(n)}\| \le \eta_n.
\end{align*}
%--

Replacing $\Delta\bu^{(n)}$  by $ \bw_\eta^{(n)}$ in \eqref{Eq:Non3} and choosing the sequence of tolerances $(\eta_n)_{n\in\N_0}$ appropriately results in a convergent algorithm such that any tolerance $\varepsilon$ is reached after finitely many steps. We set $\bar\bu(\varepsilon):=\textbf{SOLVE}[\varepsilon,\bR,\bB^{(n)},\bu^{(0)}]$ such that we get $\| \bu-\bar\bu(\varepsilon)||\le\varepsilon$.

In terms of optimality, there are several issues to be considered:\\[-1.0\baselineskip]
\begin{itemize}
\item How many iterations $n(\varepsilon)$ are required in order to achieve $\varepsilon$-accuracy?
\item How many ``active'' coefficients are needed to represent the numerical approximation and how does that compare with a ``best'' approximation?
\item How many operations (arithmetic, storage) and how much storage is required?
\end{itemize}

In order to quantify that, one considers so-called \emph{approximation classes}
$$
	\cA^s := \{ \bv\in\ell_2(\cJ^\Psi):\, \sigma_N(\bv)\lesssim N^{-s}\}
$$
of all those sequences whose \emph{error of best $N$-term approximation}
$$
	\sigma_N(\bv) := \min \{ \| \bv-\bw\|_{\ell_2}: \#\,\Supp\bw\le N\}
$$
decays at a certain rate ($\Supp\bv := \{ \lambda\in\cJ^\Psi:\, v_\lambda\not=0\}$, $\bv=(v_\lambda)_{\lambda\in\cJ^\Psi}$).
%For the case $V=H^t$ it is known that the solution being in some Besov space $u\in B^{t+ds}_\tau(L_\tau)$, $\tau=(s+\frac12)^{-1}$ implies that $\bu\in\cA^s$.

Let us first consider the case where $F=A$ is a linear elliptic partial differential operator, i.e., $Au= g\in V'$, where $A\colon  V\to V'$, $g\in V'$ is given and $u\in V$ is to be determined. For the discretization we use a wavelet basis $\Psi$ in $H$ where rescaled versions admit Riesz bases in $V$ and $V'$, respectively. Then, the operator equation can  equivalently be written as
\begin{align*}
    \bA\bu= \bg\in \ell_2 (\cJ^\Psi ),
\end{align*}
where $\bA:= \bD^{-1} a (\Psi, \Psi)\bD^{-1}$, $\bg := \bD^{-1} (g, \Psi)$ and $\bu: = \bD (u_\lambda)_{\lambda\in\cJ^\Psi}$, with $u_\lambda$ being the wavelet coefficients of the unknown function $u\in V$, $\Vert u\Vert_V\sim\Vert \bu\Vert_{\ell_2(\cJ^\Psi )}$. Wavelet preconditioning results in the fact that $\kappa_2(\bA) < \infty$, \cite{CDD}.

The (biinfinite) matrix $\bA$ is said to be \emph{$s^*$-compressible}, $\bA\in\cC_{s^*}$, if for any $0<s<s^*$ and every $j\in\N$ there exists a matrix $\bA_j$ with the following properties: For some summable sequence $(\alpha_j)_{j\in\N}$, the matrix $\bA_j$ is obtained by replacing all but the order of $\alpha_j 2^j$ entries per row and column in $\bA$ by zero and satisfies
$$
	\|\bA - \bA_j\| \le C \alpha_j 2^{-js},\quad j\in\N.
$$
Wavelet representations of differential (and certain integral) operators fall into this category. Typically, $s^*$ depends on the regularity and the order of vanishing moments of the wavelets. Then, one can construct a linear counterpart $\textbf{RES}_{\text{lin}}$ of $\textbf{RES}$ such that $\bw_\eta:=\textbf{RES}_{\text{lin}}[\eta,\bA,\bg,\bv]$ for finite input $\bv$ satisfies
\begin{subequations}
	\begin{align}
		\| \bw_\eta - (\bA\bv-\bg)\|_{\ell_2} &\le \eta, \label{Nonlin1a}\\
		\|\bw_\eta\|_{\cA^s} &\lesssim ( \|\bv\|_{\cA^s} + \| \bu\|_{\cA^s}), \label{Nonlin1b} \\
		\#\Supp \bw_\eta &\lesssim \eta^{-1/s}  ( \|\bv\|_{\cA^s}^{1/s} + \| \bu\|_{\cA^s}^{1/s}), \label{Nonlin1c}
	\end{align}
\end{subequations}
%--
where the constants in \eqref{Nonlin1b}, \eqref{Nonlin1c} depend only on $s$. Here, we have used the quasi-norm
$$
	\| \bv\|_{\cA^s} := \sup_{N\in\N} N^s \sigma_N(\bv).
$$
This is the main ingredient for proving optimality of the scheme in the following sense.
\begin{theorem}[\cite{CDD,CDD-Nonlinear}]
	If $\bA\in\cC_{s^*}$ and if the exact solution $\bu$ of $\bA\bu=\bg$ satifies $\bu\in \cA^s$, $s<s^*$, then $\bar\bu(\varepsilon) = \bf{SOLVE}_{\rm{lin}}[\varepsilon]$ satisfies
%--
\begin{subequations}
	\begin{align}
		\|\bu-\bar\bu(\varepsilon)\| &\le\varepsilon, \\
		\# \Supp \bar\bu(\varepsilon) &\lesssim \varepsilon^{-1/s},\\
		\mbox{computational complexity} \sim \# \Supp \bar\bu(\varepsilon).
	\end{align}
\end{subequations}
%--
\end{theorem}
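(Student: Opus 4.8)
The plan is to treat $\textbf{SOLVE}_{\mathrm{lin}}$ as a perturbed Richardson (gradient) iteration interleaved with a coarsening step, and to leverage the approximate residual routine $\textbf{RES}_{\mathrm{lin}}$ through its three properties \eqref{Nonlin1a}--\eqref{Nonlin1c}. Since wavelet preconditioning gives $\kappa_2(\bA)<\infty$ and $\bA$ inherits symmetry and positive definiteness from the elliptic form $a(\cdot,\cdot)$, the exact iteration $\bu^{(n+1)}=\bu^{(n)}-\omega(\bA\bu^{(n)}-\bg)$ contracts with a fixed factor $\rho<1$ for a suitable step length $\omega$; this fact underlies all three claims.

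For the accuracy estimate I would replace the exact residual $\bA\bu^{(n)}-\bg$ by $\bw_\eta^{(n)}=\textbf{RES}_{\mathrm{lin}}[\eta_n,\bA,\bg,\bu^{(n)}]$ and control the perturbation by $\eta_n$ via \eqref{Nonlin1a}. Choosing the tolerances $\eta_n$ to decrease geometrically (tied to $\rho$) keeps the inexact iteration contractive with some factor $\bar\rho<1$ per sweep, so the target tolerance is reached after $n(\varepsilon)\sim\log(1/\varepsilon)$ steps, yielding $\|\bu-\bar\bu(\varepsilon)\|\le\varepsilon$.

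The support bound is the crux. The difficulty is that \eqref{Nonlin1b} only gives $\|\bw_\eta^{(n)}\|_{\cA^s}\lesssim\|\bu^{(n)}\|_{\cA^s}+\|\bu\|_{\cA^s}$, so the active sets of the iterates may grow. The remedy is the coarsening step: for an iterate $\bv$ with $\|\bu-\bv\|\le\delta$ and bounded $\cA^s$-quasi-norm, thresholding to its best $N$-term part produces $\bar\bv$ with $\#\Supp\bar\bv\lesssim\delta^{-1/s}\|\bu\|_{\cA^s}^{1/s}$ at the cost of only a constant-factor increase in error. One must first establish, by induction over the sweeps and using \eqref{Nonlin1b}, that every iterate stays in $\cA^s$ with quasi-norm uniformly bounded in terms of $\|\bu\|_{\cA^s}$; applying coarsening at the final tolerance $\delta\sim\varepsilon$ then gives $\#\Supp\bar\bu(\varepsilon)\lesssim\varepsilon^{-1/s}$.

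Finally, for the complexity I would use that the arithmetic cost of each call to $\textbf{RES}_{\mathrm{lin}}$ is proportional to the size of its output support, which by \eqref{Nonlin1c} is $\lesssim\eta_n^{-1/s}$ times the now-bounded $\cA^s$-factors; here $s^*$-compressibility is exactly what allows the matrix-vector products with the compressed matrices $\bA_j$ to be simultaneously $\eta_n$-accurate and support-efficient. Summing over the $n(\varepsilon)$ sweeps, the geometrically decaying $\eta_n$ render the total a geometric series dominated by its finest term $\eta\sim\varepsilon$, so the overall cost is $\lesssim\varepsilon^{-1/s}\sim\#\Supp\bar\bu(\varepsilon)$. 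The main obstacle throughout is the second step---proving the coarsening lemma together with the uniform $\cA^s$-bound on the iterates---since this is what pins the output to the best-$N$-term benchmark $\varepsilon^{-1/s}$ with constants independent of $\varepsilon$.
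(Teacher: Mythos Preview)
The paper does not give its own proof of this theorem: it is quoted as a known result from \cite{CDD,CDD-Nonlinear}, so there is nothing to compare against in the paper itself. Your sketch is a faithful outline of the argument in those references---perturbed Richardson iteration using $\textbf{RES}_{\mathrm{lin}}$, geometrically decreasing inner tolerances, and a coarsening step to keep the iterates pinned to the best-$N$-term benchmark---and is correct at the level of a plan.
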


It turns out that most of what is said before also holds for the nonlinear case except that the analysis of the approximate evaluation of nonlinear expressions $\bR(\bv)$ poses a constraint on the structure of the active coefficients, namely that it has \emph{tree structure}. In order to define this, one uses the notation $\mu\prec\lambda$, $\lambda,\mu\in\cJ^\Psi$ to express that $\mu$ is a \textit{descendant} of $\lambda$. We explain this in the univariate case with $\psi_\lambda=\psi_{j,k} = 2^{j/2}\psi(2^j\cdot -k)$. Then, the \textit{children} of $\lambda=(j,k)$ are, as one would also intuitively define, $\mu=(j+1,2k)$ and $\nu=(j+1,2k+1)$. The descendants of $\lambda$ are its children, the children of its children and so on. In higher dimensions and even on more complex domains this can also be defined -- with some more technical effort, however.

Then, a set $\cT\subset\cJ^\Psi$ is called a \emph{tree} if $\lambda\in\cT$ implies $\mu\in\cT$ for all $\mu\in\cJ^\Psi$ with $\lambda\prec\mu$. Given this, the error of the \emph{best $N$-term tree approximation} is given as
%--
\begin{align*}
	\sigma_N^{\rm{tree}}(\bv) := \min \{ \| \bv-\bw\|_{\ell_2}:
				\cT:= \#\,\Supp\bw\ \mbox{is a tree and } \#\cT\le N\}
\end{align*}
%--
and define the \emph{tree approximation space} as
%--
\begin{align*}
	\cA^s_{\rm{tree}} := \{ \bv\in\ell_2(\cJ^\Psi):\, \sigma_N^{\rm{tree}}(\bv)\lesssim N^{-s}\}
\end{align*}
%--
which is a quasi-normed space under the quasi-norm
%--
\begin{align*}
		\| \bv\|_{\cA^s_{\rm{tree}}} := \sup_{N\in\N} N^s \sigma_N^{\rm{tree}}(\bv).
\end{align*}
%--
\begin{remark}\label{rem:bes}
  For the case $V=H^t$ (or, a closed subspace of $H^t$) it is known
  that the solution being in some Besov space $u\in B^{t+ds}_q(L_q)$,
  $q=(s+\frac12)^{-1}$, implies that $\bu\in\cA^r_{\rm{tree}}$, for
  $r<s$, see \cite[Remark 2.3]{CDDsparse}.
\end{remark}
The extension of the $s^*$-compressibility $\cC_{s^*}$ is the \emph{$s^*$-sparsity} of the scheme $\textbf{RES}$ which is defined by the following property: \sl
If the exact solution $\bu$ of \eqref{Eq:Non2} is in $\cA_{\rm{tree}}^s$ for some $s<s^*$, then $\bw_\eta := \bf{RES}[\eta,\bB,\bR,\bv]$ for finite $\bv$ satisfies
%--
	\begin{align*}
		\|\bw_\eta\|_{\cA^s_{\rm{tree}}} &\le C ( \|\bv\|_{\cA^s_{\rm{tree}}} + \| \bu\|_{\cA^s_{\rm{tree}}}+1),\\
		\#\Supp \bw_\eta &\le C  \eta^{-1/s}  ( \|\bv\|_{\cA^s_{\rm{tree}}}^{1/s} + \| \bu\|_{\cA^s_{\rm{tree}}}^{1/s}+1), \\
		\mbox{comp.\ complexity} &\sim  C(  \eta^{-1/s}  ( \|\bv\|_{\cA^s_{\rm{tree}}}^{1/s} + \| \bu\|_{\cA^s_{\rm{tree}}}^{1/s}+1) +   \#\cT(\Supp \bv) ),
	\end{align*}
%--
where $C$ depends only on $s$ when $s\to s^*$ and $\cT(\Supp\bv)$ denotes the smallest tree containing $\Supp \bv$.
\rm Now, we are ready to collect the main result.

\begin{theorem}[{\cite[Theorem 6.1]{CDD-Nonlinear}}]\label{Thm:CDD}
	If $\bf{RES}$ is $s^*$-sparse, $s^*>0$ and if $\bu\in\cA^s_{\rm{tree}}$ for some $s<s^*$, then the approximations $\bar\bu(\varepsilon)$ satisfy $\|\bu-\bu(\varepsilon)\|\le\varepsilon$ with
	$$
	\#\Supp \bar\bu(\varepsilon) \le C\, \varepsilon^{-1/s} \|\bu\|_{\cA^s_{\rm{tree}}}^{1/s},\quad
	\|\bar\bu(\varepsilon)\|_{\cA^s_{\rm{tree}}}\le C \|\bu\|_{\cA^s_{\rm{tree}}},
	$$
	where $C$ depends only on $s$ when $s\to s^*$. The number of operations is bounded by $C \varepsilon^{-1/s} \|\bu\|_{\cA^s_{\rm{tree}}}^{1/s}$.
\end{theorem}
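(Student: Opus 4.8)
The plan is to take the $s^*$-sparsity of \textbf{RES} as given and turn the idealized iteration \eqref{Eq:Non3} into a convergent, support-optimal algorithm, in three steps: (i) linear convergence of the perturbed iteration in $\ell_2$; (ii) control of the tree-structured support and of the $\cA^s_{\rm{tree}}$ quasi-norm by an interleaved coarsening step; (iii) the operation count.

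For (i) I would first record that the exact iteration $\bu^{(n+1)}=\bu^{(n)}-\bB^{(n)}\bR(\bu^{(n)})$ is a fixed-step contraction, $\|\bu^{(n+1)}-\bu\|\le\rho\,\|\bu^{(n)}-\bu\|$ with some $\rho<1$; this is where the mapping properties of $F$ enter --- strong monotonicity and Lipschitz continuity (for the operator behind \eqref{Eq:5} this holds when $\tau L_f$ is small), ensuring that $\bR$ is boundedly invertible near $\bu$ with Lipschitz inverse and that $\bB^{(n)}$ can be chosen to yield error reduction. Replacing $\Delta\bu^{(n)}$ by $\bw_\eta^{(n)}=\textbf{RES}[\eta_n,\bB^{(n)},\bR,\bu^{(n)}]$ perturbs each step by at most $\eta_n$; with $\eta_n\sim\rho^{\,n}$ and a summation of the geometric series of perturbations this gives $\|\bu^{(n)}-\bu\|\le C\rho^{\,n}$, so $n(\varepsilon)\sim\log(1/\varepsilon)$ inner iterations reach accuracy $\varepsilon$. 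This part is routine perturbation analysis of a linearly convergent scheme.

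Step (ii) is the heart of the matter. The perturbed iteration by itself would let the supports of the iterates grow geometrically, yielding only a poly-logarithmic overshoot instead of the clean $\varepsilon^{-1/s}$ bound, so following \cite{CDD-Nonlinear} I would interrupt the iteration every fixed number $K$ of steps by a tree-thresholding operator \textbf{COARSE}, applied with a tolerance proportional to the current error: given finitely supported $\bv$ with $\|\bv-\bu\|\le\delta$, \textbf{COARSE} returns $\bar\bv$ with $\|\bar\bv-\bv\|\le c\,\delta$ and, crucially, $\#\Supp\bar\bv\lesssim\delta^{-1/s}\|\bu\|_{\cA^s_{\rm{tree}}}^{1/s}$ together with $\|\bar\bv\|_{\cA^s_{\rm{tree}}}\lesssim\|\bu\|_{\cA^s_{\rm{tree}}}$; this is the tree analogue of the fact that thresholding a near-best approximant stays near-best, and is precisely where $\bu\in\cA^s_{\rm{tree}}$ is used. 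Choosing $K$ so large that $\rho^{K}$ times the accumulated constants is $<1$, the macro-step ``$K$ perturbed iterations followed by one \textbf{COARSE}'' still contracts the error by a fixed factor while resetting the support to the size dictated by the current accuracy. Propagating this through $O(\log(1/\varepsilon))$ macro-steps and invoking $s^*$-sparsity of \textbf{RES} --- which bounds each inner output by $\|\bw_\eta\|_{\cA^s_{\rm{tree}}}\le C(\|\bv\|_{\cA^s_{\rm{tree}}}+\|\bu\|_{\cA^s_{\rm{tree}}}+1)$ and $\#\Supp\bw_\eta\le C\eta^{-1/s}(\|\bv\|_{\cA^s_{\rm{tree}}}^{1/s}+\|\bu\|_{\cA^s_{\rm{tree}}}^{1/s}+1)$ --- I would show inductively that every iterate stays in a fixed $\cA^s_{\rm{tree}}$-ball and that the final $\bar\bu(\varepsilon)$ satisfies $\#\Supp\bar\bu(\varepsilon)\lesssim\varepsilon^{-1/s}\|\bu\|_{\cA^s_{\rm{tree}}}^{1/s}$ and $\|\bar\bu(\varepsilon)\|_{\cA^s_{\rm{tree}}}\lesssim\|\bu\|_{\cA^s_{\rm{tree}}}$. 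The bookkeeping that tracks the additive $+1$ terms and the degeneration of all constants as $s\to s^*$ is the delicate point, and I expect the main obstacle to sit there: one must make sure the geometric error decay always beats the slow growth of constants so that the induction closes.

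For (iii), $s^*$-sparsity gives that each \textbf{RES} call at macro-step $j$ costs $\lesssim\eta_j^{-1/s}(\|\bu^{(n)}\|_{\cA^s_{\rm{tree}}}^{1/s}+\|\bu\|_{\cA^s_{\rm{tree}}}^{1/s}+1)+\#\cT(\Supp\bu^{(n)})$ operations while each \textbf{COARSE} call costs $\lesssim\#\Supp(\text{input})\,\log\#\Supp(\text{input})$; since the $\eta_j$ decay geometrically to $\varepsilon$ while by (ii) the quasi-norms and supports stay uniformly controlled, the cost is dominated by the last macro-step, and summing the geometric series yields a total of $C\,\varepsilon^{-1/s}\|\bu\|_{\cA^s_{\rm{tree}}}^{1/s}$ operations, as claimed.
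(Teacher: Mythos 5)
This theorem is quoted verbatim from \cite[Theorem 6.1]{CDD-Nonlinear}; the paper itself offers no proof, so there is nothing internal to compare your argument against. Judged against the original reference, your outline is faithful to the actual proof strategy: a linearly convergent idealized iteration whose residual evaluations are replaced by the inexact routine $\textbf{RES}$ with geometrically decaying tolerances, periodic tree-coarsening to reset the support and keep the iterates in a fixed $\cA^s_{\rm{tree}}$-ball, and a complexity count dominated by the final macro-step. You also correctly identify the two genuinely delicate points, namely the tree-coarsening lemma (that thresholding a near-best tree approximant of $\bu$ yields support of size $\lesssim\delta^{-1/s}\norm[\cA^s_{\rm{tree}}]{\bu}^{1/s}$ with controlled quasi-norm) and the bookkeeping that keeps all constants bounded as $s\to s^*$. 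Two caveats: first, the contraction of the exact iteration is an \emph{assumption} on the choice of $\bB^{(n)}$ in \cite{CDD-Nonlinear} (verified under strong monotonicity or via a Newton-type analysis), not something derivable from $s^*$-sparsity alone, so in a self-contained proof you would need to state it as a hypothesis; second, your aside that the operator behind \eqref{Eq:5} satisfies this when $\tau L_f$ is small is plausible but is an application-level remark, not part of the quoted theorem. As a reconstruction of the cited proof your proposal is sound in structure, though it is a sketch rather than a complete argument, with the coarsening lemma and the induction closing the $\cA^s_{\rm{tree}}$-bounds left at the level of statements to be proved.
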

We remark that since the wavelet transform is of linear complexity
the overall number of operations needed is the one mentioned in Theorem \ref{Thm:CDD}.

Next we show that the wavelet coefficients
$\mathbf{\bar{v}}^n$ of the solution of \eqref{Eq:5} belong to a certain approximation
class $\cA^s_{\rm{tree}}$ and hence, in view of Theorem \ref{Thm:CDD}, we obtain an estimate on the support
of $\bar\bv^n_\varepsilon$ and the number of operations required to
compute it.
\begin{theorem}\label{thm:complexity}
  The wavelet coefficients $\mathbf{\bar{v}}^n$ of the solution of
  \eqref{Eq:5} belong to $\cA^s_{\rm{tree}}$ for all
  $s<\frac{1}{2d-2}$, where $d\ge 2$ is the spatial dimension of
  $\mathcal{D}$.
\end{theorem}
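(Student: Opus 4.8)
The plan is to combine elliptic regularity in the scale of Besov spaces with the Sobolev–Besov embedding results quoted in Remark~\ref{rem:bes}. The starting point is that, by Remark~\ref{rem:bes}, it suffices to show that for some $s<\tfrac{1}{2d-2}$ the solution $\bar v^n$ of \eqref{Eq:5} lies in the Besov space $B^{t+ds}_q(L_q)$ with $q=(s+\tfrac12)^{-1}$, where $t=1$ since $V=H^1_0(\cD)$; equivalently, we need the Besov smoothness index $t+ds=1+ds$ to be attainable for $\bar v^n$. The key observation is that $\bar v^n$ solves, for each fixed $\omega$ and each $n$, the stationary semilinear elliptic problem
\begin{equation*}
  \bar v^n + \tau A\bar v^n = \bar v^{n-1} + \tau f(\bar v^n + w_J^n)\in H,
\end{equation*}
whose right-hand side belongs to $H=L_2(\cD)$ (using the Lipschitz bound \eqref{Lip} on $f$, the regularity $w_J^n\in S_J\subset V$, and induction on $n$ starting from $\bar v^0 = u_0\in\dot H^\beta\subset H$). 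Hence $\bar v^n\in D(A)=H^2(\cD)\cap H^1_0(\cD)$, i.e.\ $\bar v^n\in H^2(\cD)$ with zero boundary values.

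The next step is to convert this Sobolev regularity $\bar v^n\in H^2(\cD)$ into Besov regularity along the nonlinear (adaptivity) scale. By the standard embedding $H^2(\cD)=B^2_2(L_2)\hookrightarrow B^{1+ds}_q(L_q)$ for $q=(s+\tfrac12)^{-1}$, which holds precisely when $1+ds$ does not exceed the DeVore–Triebel borderline $2$, i.e.\ when $ds\le 1$, so $s\le 1/d$. Since $1/d > 1/(2d-2)$ for $d\ge 2$ (indeed $2d-2>d$ iff $d>2$, with equality at $d=2$), the range $s<\tfrac{1}{2d-2}$ is comfortably inside the admissible window, and the embedding yields $\bar v^n\in B^{1+ds}_q(L_q)$ for every such $s$. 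Applying Remark~\ref{rem:bes} with $t=1$ then gives $\mathbf{\bar v}^n\in\cA^r_{\rm{tree}}$ for all $r<s$, and letting $s\uparrow \tfrac{1}{2d-2}$ we conclude $\mathbf{\bar v}^n\in\cA^s_{\rm{tree}}$ for all $s<\tfrac{1}{2d-2}$, uniformly in $n$ and $\omega$.

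The main obstacle is the sharp bookkeeping in the second step: one must be careful that the Sobolev–Besov embedding on a bounded Lipschitz/convex polygonal domain $\cD$ is the \emph{nonlinear} embedding line relevant for best $N$-term (tree) approximation, not the linear one, and that the borderline case $d=2$ (where $1/(2d-2)=1/2=1/d$) is handled by staying strictly below the endpoint via the $r<s$ slack already present in Remark~\ref{rem:bes}. A secondary point is to make the induction on $n$ genuinely uniform: the bound $\|f(\bar v^n+w_J^n)\|\le \|f(0)\|+L_f\|\bar v^n\|+L_f\|w_J^n\|$ together with $\tau L_f<\tfrac12$ and the contractivity of $(I+\tau A)^{-1}$ on $H$ gives $\|\bar v^n\|\le C(1+\|\bar v^{n-1}\|)$, hence a bound growing at most geometrically in $n$ but finite for each fixed $N$ and each $\omega$; since Theorem~\ref{thm:complexity} asserts only membership in $\cA^s_{\rm{tree}}$ (not a quantitative uniform norm bound), this suffices. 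No deeper regularity theory is needed because the splitting $u=v+w$ has already removed the rough stochastic convolution, leaving a per-step elliptic problem with an $L_2$ right-hand side.
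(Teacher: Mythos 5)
Your argument is correct under the paper's standing hypotheses, but it takes a genuinely different route from the one the paper uses. The paper's proof is a one-liner: it invokes \cite[Lemma 5.15]{Dahlkeetal2}, which asserts that the resolvent $r(\tau A)=(I+\tau A)^{-1}$ maps $L_2(\cD)$ boundedly into $B^{r}_q(L_q)$ with $r=\frac{3d-2+4\varepsilon}{2d-2+4\varepsilon}$ and $1/q=(r-1)/d+1/2$; writing $r=1+ds$ gives $s=\frac{1}{2d-2+4\varepsilon}$, and Remark~\ref{rem:bes} with $t=1$ finishes the proof. Both proofs therefore share your first step --- observing that the right-hand side $\bar v^{n-1}+\tau f(\bar v^n+w_J^n)$ lies in $L_2(\cD)$ --- and differ only in how $L_2$ data is converted into Besov regularity on the adaptivity scale. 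Your route goes through the full elliptic shift $D(A)=H^2(\cD)\cap H^1_0(\cD)$ and the elementary embedding $H^2(\cD)\hookrightarrow B^{1+ds}_q(L_q)$ (valid for $ds<1$ because $q<2$ and $\cD$ is bounded, so only the smoothness ordering is binding), which in fact yields the stronger conclusion $\mathbf{\bar{v}}^n\in\cA^s_{\mathrm{tree}}$ for all $s<1/d$; this strictly improves on $1/(2d-2)$ for $d=3$ and coincides with it for $d=2$. What the paper's citation buys instead is robustness: \cite[Lemma 5.15]{Dahlkeetal2} is a Besov regularity result in the Dahlke--DeVore spirit that survives on general Lipschitz (in particular non-convex polygonal) domains, where $D(A)\ne H^2\cap H^1_0$, your second step collapses, and $1/(2d-2)$ is the natural rate --- precisely the regime in which, as the introduction notes, adaptivity is actually motivated. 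So your proof is more elementary but less portable; as written it is sound because the paper assumes $\cD$ convex polygonal or smooth, though you should state explicitly that this is where convexity/smoothness of the boundary enters, and keep the strict inequality $ds<1$ (absorbed by the $r<s$ slack in Remark~\ref{rem:bes}) to avoid fine-index issues at the endpoint.
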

\begin{proof}
  It follows from \cite[Lemma 5.15]{Dahlkeetal2} that $r(\tau A)\in
  \mathcal{L}(L_2(\mathcal{D}), B^{r}_q(L_q))$ for
  $r=\frac{3d-2+4\varepsilon}{2d-2+4\varepsilon}$, where
  $1/q=(r-1)/d+1/2$ and $\varepsilon>0$. Thus, the statement follows
  from Remark \ref{rem:bes} noting that $t=1$ and hence $r=1+ds$.
\end{proof}
We end this section by showing \eqref{Eq:5c}; that is, the overall
error after $n$ steps, when in every step \eqref{Eq:5} is solved
approximately up to an error tolerance $\varepsilon_n$ using the
adaptive wavelet algorithm described above.  Define
\begin{align*}
  E_j^n=E_n\circ \dots \circ E_{j+1}, \ E_n^n=I; \quad 0\le j<n\le N,
\end{align*}
and similarly $\tilde{E}_j^n$.  Then we have
\begin{align*}
  v_\varepsilon^n-\bar{v}^n
&
=\tilde{E}_0^n(u_0)-{E}_0^n(u_0)
\\ &
= \sum_{j=0}^{n-1}\big(
E_{j+1}^{n}( \tilde{E}_{0}^{j+1}(u_0) )
  - {E}_{j}^{n}( \tilde{E}_{0}^{j}(u_0) )\big)
\\ &
= \sum_{j=0}^{n-1}\big(
E_{j+1}^{n}(\tilde{E}_{j}^{j+1}(\tilde{E}_{0}^{j}(u_0)))
  -{E}_{j+1}^{n}({E}_{j}^{j+1}(\tilde{E}_{0}^{j}(u_0)))\big)
\\ &
= \sum_{j=0}^{n-1}\big(
E_{j+1}^{n}(\tilde{E}_{j+1}(v_\varepsilon^j))
  -{E}_{j+1}^{n}({E}_{j+1}(v_\varepsilon^j))\big) .
\end{align*}
A simple argument shows that the Lipschitz constant of $E_n$ is
bounded by $(1-\tau L_f)^{-1}\le \ee^{c\tau L_f}$ for some $c>0$, if $\tau L_f\le
\frac12$, cf.\ the proof of Lemma \ref{lem:s1}.  Hence $E_{j+1}^{n}$
has a Lipschitz constant bounded by $\ee^{c(t_n-t_{j+1})}\le \ee^{ct_N}$.
Thus, using \eqref{Eq:5b}, we obtain
\begin{align*}
    \norm{v_\varepsilon^n-\bar{v}^n}
 \le
 \sum_{j=0}^{n-1}\ee^{c(t_n-t_{j+1})}
\norm{\tilde{E}_{j+1}(v_\varepsilon^j)-{E}_{j+1}(v_\varepsilon^j) }
 \le
 \sum_{j=1}^{n}\ee^{c(t_n-t_{j})} \varepsilon_j
 \le \ee^{ct_N}  \sum_{j=1}^{n} \varepsilon_j.
\end{align*}
After taking a mean square we obtain \eqref{Eq:5c}.

%-----------------------------------
\section{Error analysis for the stochastic convolution}
\label{sec:multires}
%----------------------------------

%
%First we derive an error estimate for the
%multiresolution approximation of the semigroup $\ee^{-tA}$ and its
%Euler approximation $r^n(\tau A)$.  The result is known in the
%finite element context, but to keep the paper self-contained we
%include a complete proof.  In the second subsection we use this
%together with It\^o's isometry to obtain a bound for the
%multiresolution approximation of the stochastic convolution.

Let $S_J=S_{\Lambda_J}$ be a multiresolution space \eqref{eq:mrs}.
The multiresolution Galerkin approximation of the equation $Au=f$ in
$V'$ is to find $u_J\in S_J$ such that
\begin{align} \label{2.3.4}
  a(u_J,v_J)=(f,v_J)\quad \forall v\in S_J.
\end{align}
Define the orthogonal projector
$P_J \colon H\to S_J$ by
\begin{align}
\label{eq:charactPII}
(P_J v, w_J)= (v,w_J),\quad v\in H,\ w_J \in S_J .
\end{align}
Note that $P_J $ can be extended to $V'$ by \eqref{eq:charactPII}
since $S_J \subset V$.  Next, we define the operator $A_J \colon S_J
\to S_J$ by
\begin{align*}
%\label{def:discLaplacian}
a(A_J  v_J, w_J) = a(v_J, w_J), \quad u_J, v_J \in S_J.
\end{align*}
Then \eqref{2.3.4} reads
$A_J  u_J = P_J f\ \mbox{ in }\ S_J.$ Alternatively we may write $u_J=R_Ju$, where $R_J \colon V\to S_J$ is
the \emph{Ritz projector}, defined by
\begin{align*}
	a(R_J v, w_J) = a(v,w_J), \quad v\in V,\ w_J\in S_J.
\end{align*}

The multiresolution space is of order $m$ if
\begin{align}\label{eq:m}
 \inf_{w_J\in S_J}\norm{v-w_J}\lesssim 2^{-mJ}\norm[m;\cD]{v}, \quad
v\in H^m(\cD)\cap V.
\end{align}
Standard arguments then show, using elliptic regularity thanks to our assumptions on $\mathcal D$, that $\norm{u_J-u}\lesssim
2^{-mJ}\norm[m;\cD]{u}$, or in other words
\begin{align}
  \label{ritzerror}
 \norm{v-R_Jv}\lesssim 2^{-mJ}\norm[m;\cD]{v}, \quad v\in H^m(\cD)\cap V.
\end{align}

%\subsection{Approximation of the semigroup} \label{subsec:s1}
The next lemma
%exploits the fact that the resolvent of the generator
%of a semigroup is the Laplace transform of the semigroup,
%$r(A)v=(I+A)^{-1}v=\int_{0}^{\infty}\ee^{-t A}v\,\ee^{-t}\,\dd t$,
%$v\in H$.
 is of independent interest and we state it in a general form.

\begin{lemma} \label{lem:euler-lemma} Let $-A$ and $-B$ generate
  strongly continuous semigroups $\ee^{-tA}$ and $\ee^{-tB}$ on a
  Banach space $X$ and let $r(s)=(1+s)^{-1}$. Then, for all $x,y\in
  X$, $N\in \mathbb{N}$, $\tau >0$,
\begin{align}\label{i}
\tau\sum_{n=1}^{N}\|r^n(\tau B)y-r^n(\tau A)x\|^p
&\le \int_0^{\infty}\|\ee^{-t B}y-\ee^{-t A}x\|^p\,\dd t, \quad 1\le p<\infty,
\\ \label{ii}
\|r^n(\tau B)y-r^n(\tau A)x\|
&\le \sup_{t\ge 0}\|\ee^{-tB}y-\ee^{-tA}x\|.
\end{align}
\end{lemma}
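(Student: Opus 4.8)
The plan is to realise the backward Euler iterate $r^n(\tau A)=(I+\tau A)^{-n}$ as an average of the semigroup $\{\ee^{-tA}\}_{t\ge0}$ against a probability density, and then to exploit convexity. Concretely, I would start from the resolvent identity $(\lambda I+A)^{-1}=\int_0^{\infty}\ee^{-\lambda t}\ee^{-tA}\,\dd t$, differentiate $n-1$ times in $\lambda$, set $\lambda=1/\tau$, and substitute $t=\tau s$ to arrive at the subordination formula
\[
  r^n(\tau A)x=\int_0^{\infty}\mu_n(s)\,\ee^{-\tau sA}x\,\dd s,\qquad
  \mu_n(s):=\frac{s^{n-1}\ee^{-s}}{(n-1)!},
\]
and likewise with $A$ replaced by $B$. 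The functions $\mu_n$ are the Gamma$(n,1)$ densities, so $\int_0^{\infty}\mu_n(s)\,\dd s=1$ for every $n$, and, the point that makes \eqref{i} work, $\sum_{n=1}^{\infty}\mu_n(s)=\ee^{-s}\sum_{k\ge0}s^k/k!=1$ for every $s>0$.

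Given this, the rest is short. By linearity $r^n(\tau B)y-r^n(\tau A)x=\int_0^{\infty}\mu_n(s)\big(\ee^{-\tau sB}y-\ee^{-\tau sA}x\big)\,\dd s$, so writing $g(t):=\|\ee^{-tB}y-\ee^{-tA}x\|$, the triangle inequality for the Bochner integral gives $\|r^n(\tau B)y-r^n(\tau A)x\|\le\int_0^{\infty}\mu_n(s)\,g(\tau s)\,\dd s$. For \eqref{ii} I would bound $g(\tau s)\le\sup_{t\ge0}g(t)$ and use $\int_0^{\infty}\mu_n=1$. For \eqref{i} I would raise to the $p$-th power and invoke Jensen's inequality with the probability density $\mu_n$ and the convex map $r\mapsto r^p$, $p\ge1$, obtaining $\|r^n(\tau B)y-r^n(\tau A)x\|^p\le\int_0^{\infty}\mu_n(s)\,g(\tau s)^p\,\dd s$; then summing over $1\le n\le N$, multiplying by $\tau$, interchanging sum and integral by Tonelli, using $\sum_{n=1}^N\mu_n(s)\le1$, and substituting $t=\tau s$ gives
\[
  \tau\sum_{n=1}^{N}\|r^n(\tau B)y-r^n(\tau A)x\|^p
  \le\tau\int_0^{\infty}\Big(\sum_{n=1}^{N}\mu_n(s)\Big)g(\tau s)^p\,\dd s
  \le\int_0^{\infty}g(t)^p\,\dd t .
\]

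The only real content is the subordination formula together with the two normalisation identities for the $\mu_n$; everything after that is the triangle inequality and Jensen. The point I would be most careful about is the validity of the resolvent representation and the convergence of the Bochner integrals involved: for a general $C_0$-semigroup with $\|\ee^{-tA}\|\le M\ee^{\omega t}$ one needs $1/\tau>\omega$, so the cleanest formulation assumes the semigroups bounded, which is harmless here since in every application $A$ and $B$ are nonnegative self-adjoint and hence generate contraction semigroups, so the formula is valid for all $\tau>0$.
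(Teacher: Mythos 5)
Your proof is correct and follows essentially the same route as the paper: the paper's $f_n$, the $n$-fold convolution power of $\ee^{-t}$, is exactly your Gamma$(n,1)$ density $\mu_n$, and both arguments then combine the resulting subordination formula with H\"older/Jensen and the normalisation $\sum_{n\ge1}\mu_n\equiv 1$. The only cosmetic differences are that the paper cites the Hille--Phillips functional calculus rather than differentiating the resolvent identity, and verifies $\sum_n f_n\equiv 1$ via the Laplace transform instead of your (more direct) exponential-series computation.
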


\begin{proof}
By the Hille-Phillips functional calculus, we have
\begin{equation}\label{iso}
r^n(\tau B)y-r^n(\tau A)x
=\int_{0}^{\infty}(\ee^{-t\tau B}y-\ee^{-t\tau A}x)f_n(t)\,\dd t,
\end{equation}
where $f_n$ denotes the $n$th convolution power of
$f(t)=\ee^{-t}$. Since $\|f_n\|_{L_1(\mathbb{R_+})}=1$ inequality
\eqref{ii} follows immediately by H\"older's inequality. To see
\eqref{i} we note that $f_n$ is a probability density and hence by
Jensen's inequality and \eqref{iso},
\begin{align*}
\tau \sum_{n=1}^{N}\|r^n(\tau B)y-r^n(\tau A)x\|^p
&=\tau \sum_{n=1}^{N}
\Big\|\int_{0}^{\infty}(\ee^{-t\tau B}y-\ee^{-t\tau A}x)f_n(t)\,\dd t \Big\|^p \\
& \le \tau \sum_{n=1}^{N}
\int_{0}^{\infty}\|\ee^{-t\tau B}y-\ee^{-t\tau A}x\|^p f_n(t) \,\dd t \\
%&
%\le \int_{0}^{\infty}\|\ee^{-t\tau B}y-\ee^{-t\tau A}x\|^p\,\tau\,\dd t\,
%  \sup_{t> 0}\sum_{n=1}^Nf_n(t)  \\
&
= \int_{0}^{\infty}\|\ee^{-tB}y-\ee^{-tA}x\|^p\,\dd t\,
  \sup_{t> 0}\sum_{n=1}^\infty f_n(t).
\end{align*}
Finally, by monotone convergence, the Laplace transform of
$\sum_{n=1}^\infty f_n$ is given by
$$
{\Big(\sum_{n=1}^\infty f_n\Big)}\hat{\phantom{\Big|}}(\lambda)
=\sum_{n=1}^\infty \hat{f_n}(\lambda)
=\sum_{n=1}^\infty\Big(\frac{1}{1+\lambda}\Big)^n
=\frac{1}{\lambda},\quad\lambda>0.
$$
Thus, $\sum_{n=1}^\infty f_n\equiv 1$ and the proof is complete.
\end{proof}

Next we derive an error estimate for the
multiresolution approximation of the semigroup $\ee^{-tA}$ and its
Euler approximation $r^n(\tau A)$.

\begin{lemma} \label{lem:s2} Let $S_J$ be a multiresolution space of
order $m$ and let $A$, $A_J$, and $P_J$ be as above.  Then, for $T\ge0$, $N\ge1$, $\tau$, we have
\begin{equation}\label{eq:l2l2}
\Big(\int_{0}^{T} \|\ee^{-tA_J}P_Jv-\ee^{-tA}v\|^2\,\dd t  \Big)^{\frac12}
\le C 2^{-J\beta} \| v \|_{\beta -1},\quad 0\le \beta\le m,
\end{equation}
and
\begin{align}\label{eq:rl2l2}
\Big(\tau\sum_{n=1}^{N}\|r^n(\tau A_J)P_Jv-r^n(\tau A)v\|^2\Big)^{\frac12}
\le C 2^{-J\beta} \| v \|_{\beta -1},\quad 0\le \beta\le m.
\end{align}
\end{lemma}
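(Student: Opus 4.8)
The plan is to reduce the fully discrete bound \eqref{eq:rl2l2} to the continuous-in-time bound \eqref{eq:l2l2} on the whole half-line, and then to prove the latter by interpolation. For the reduction, extend $A_J$ to the bounded operator $B_J:=A_JP_J$ on $H$; then $-B_J$ generates the $C_0$-contraction semigroup $\ee^{-tB_J}=(I-P_J)+\ee^{-tA_J}P_J$, and similarly $r^n(\tau B_J)=(I-P_J)+r^n(\tau A_J)P_J$, so that $\ee^{-tB_J}P_Jv=\ee^{-tA_J}P_Jv$ and $r^n(\tau B_J)P_Jv=r^n(\tau A_J)P_Jv$. Lemma~\ref{lem:euler-lemma}, inequality \eqref{i} with $p=2$, $B=B_J$, $y=P_Jv$ and $x=v$, then reduces both \eqref{eq:l2l2} and \eqref{eq:rl2l2} to the single estimate
\begin{equation*}
\Big(\int_0^{\infty}\|\ee^{-tA_J}P_Jv-\ee^{-tA}v\|^2\,\dd t\Big)^{1/2}\le C\,2^{-J\beta}\|v\|_{\beta-1},\qquad 0\le\beta\le m,
\end{equation*}
with $C$ independent of $T$, $N$, $\tau$; restricting the integral to $[0,T]$ gives \eqref{eq:l2l2}.

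Let $G_J$ denote the error operator $(G_Jv)(t):=\ee^{-tA_J}P_Jv-\ee^{-tA}v$, regarded as a linear map into $L_2(0,\infty;H)$. The endpoint $\beta=0$ is just stability: the spectral calculus gives $\int_0^\infty\|\ee^{-tA}v\|^2\,\dd t=\tfrac12\|v\|_{-1}^2$, while the identity $A_J^{-1}P_J=R_JA^{-1}$ gives $\int_0^\infty\|\ee^{-tA_J}P_Jv\|^2\,\dd t=\tfrac12\|A_J^{-1/2}P_Jv\|^2=\tfrac12\,a(R_JA^{-1}v,A^{-1}v)\le C\|A^{-1}v\|_V^2=C\|v\|_{-1}^2$, using the $V$-stability of the Ritz projector; hence $\|G_J\|_{\dot H^{-1}\to L_2(0,\infty;H)}\le C$. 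Granted the opposite endpoint, $\|G_J\|_{\dot H^{m-1}\to L_2(0,\infty;H)}\le C\,2^{-mJ}$, interpolation of $G_J$ between $\dot H^{-1}$ and $\dot H^{m-1}$, together with $[\dot H^{-1},\dot H^{m-1}]_\theta=\dot H^{\beta-1}$ for $\beta=\theta m$, yields the displayed estimate for every $0\le\beta\le m$.

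It remains to handle $\beta=m$. Put $u(t)=\ee^{-tA}v$, $u_J(t)=\ee^{-tA_J}P_Jv$ and split $u_J-u=\theta_0+\rho_0$, where $\rho_0=(P_J-I)u$ and $\theta_0=u_J-P_Ju\in S_J$. For $\rho_0$, the approximation property \eqref{eq:m} and the smoothing identity $\int_0^\infty\|A^{m/2}\ee^{-tA}v\|^2\,\dd t=\int_0^\infty\|A^{1/2}\ee^{-tA}A^{(m-1)/2}v\|^2\,\dd t=\tfrac12\|v\|_{m-1}^2$ give $\|\rho_0\|_{L_2(0,\infty;H)}\le C\,2^{-mJ}\|v\|_{m-1}$. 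For $\theta_0$, using $u'=-Au$, the definitions of $A_J$, $P_J$, $R_J$, and the $a$-orthogonality of $(I-R_J)u$ to $S_J$, one checks that
\begin{equation*}
\theta_0'+A_J\theta_0=-A_J\zeta,\quad t>0,\qquad\theta_0(0)=0,\qquad\zeta:=(P_J-R_J)u\in S_J.
\end{equation*}
The decisive point is that $\theta_0(0)=0$ — which is exactly why the splitting is made with $P_J$ and not with $R_J$. Extending $\theta_0$ and $\zeta$ by zero for $t<0$ and taking the Fourier transform in $t$ (all functions having values in the fixed finite-dimensional space $S_J$) yields $(i\xi+A_J)\widehat{\theta_0}(\xi)=-A_J\widehat{\zeta}(\xi)$; since $A_J$ is self-adjoint and positive on $S_J$, the symbol $A_J(i\xi+A_J)^{-1}$ has operator norm $\le1$ for every $\xi\in\R$, so by Plancherel $\|\theta_0\|_{L_2(0,\infty;H)}\le\|\zeta\|_{L_2(0,\infty;H)}$. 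Finally $\|\zeta(t)\|\le\|(I-P_J)u(t)\|+\|(I-R_J)u(t)\|\le C\,2^{-mJ}\|u(t)\|_{m;\cD}$ by \eqref{eq:m} and \eqref{ritzerror}, so $\|\zeta\|_{L_2(0,\infty;H)}\le C\,2^{-mJ}\|v\|_{m-1}$ exactly as for $\rho_0$; adding the two contributions gives the $\beta=m$ endpoint.

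The step I expect to be the main obstacle is the bound for $\theta_0$. A plain energy estimate only controls $\int_0^\infty\|\theta_0\|_V^2\,\dd t$ by $\int_0^\infty\|\zeta\|_V^2\,\dd t$, and since $\|\zeta(t)\|_V$ is only $O(2^{-(m-1)J})$ this loses one power of $2^{-J}$ compared with the target rate. Recovering the sharp rate forces an essential use of $\theta_0(0)=0$ — which is what dictates the $P_J$-splitting — together with a parabolic duality argument (equivalently, the Fourier-multiplier estimate above); the remaining ingredients are a routine combination of \eqref{eq:m}, \eqref{ritzerror}, and the smoothing identity $\int_0^\infty\|A^{1/2}\ee^{-tA}w\|^2\,\dd t=\tfrac12\|w\|^2$.
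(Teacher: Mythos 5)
Your proof is correct and follows the same route as the paper: \eqref{eq:rl2l2} is reduced to \eqref{eq:l2l2} (extended to the half-line) via Lemma \ref{lem:euler-lemma} with $x=v$, $y=P_Jv$, $B=A_J$, and \eqref{eq:l2l2} is obtained by interpolating between a $\beta=0$ stability bound and a $\beta=m$ approximation bound built from the orthogonal and Ritz projections together with the smoothing identity \eqref{eq:anal3}. The only divergence lies in the details the paper delegates to \cite{Tho}: where the cited finite element argument treats the $\theta$-term by an energy/duality argument, you use the error equation $\theta_0'+A_J\theta_0=-A_J\zeta$ with $\theta_0(0)=0$ and the Fourier--Plancherel multiplier bound $\|A_J(i\xi+A_J)^{-1}\|\le 1$, which is an equally valid way to obtain the sharp rate without losing a power of $2^{-J}$.
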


\begin{proof}
  Estimate \eqref{eq:l2l2} is known in the finite element context, see
  for example \cite[Theorem 2.5]{Tho}, and may be proved in a
  completely analogous fashion for using the approximation property
  \eqref{ritzerror} of the Ritz projection $R_J$, the parabolic
  smoothing \eqref{eq:anal3}, and interpolation.  Finally,
  \eqref{eq:rl2l2} follows from \eqref{eq:l2l2} by using Lemma
  \ref{lem:euler-lemma} with $x=v$, $y=P_Jv$, and $B=A_J$.  (Note that
  $C$ is independent of $T$.)

\end{proof}

Now we are ready to consider the multiresolution approximation of
$w^n$ in \eqref{Eq:3a}.
\begin{theorem}\label{Thm:wn}
  Let $S_J$ be a multiresolution space of order $m$ and $w$ and
  $w_J^n$ the solutions of \eqref{Eq:3a} and \eqref{eq:multires}. If
  $\|A^{\frac{\beta-1}{2}}Q^{\frac12}\|_{\HS}<\infty$ for some $0\le
  \beta \le m$, then
\begin{align*}
(\bbE [\| w_J^n - w^n\|^2 ])^{\frac12}
\le C 2^{-J\beta}
\|A^{\frac{\beta-1}{2}}Q^{\frac12}\|_{\HS}.
\end{align*}
\end{theorem}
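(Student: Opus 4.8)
The plan is to represent both $w^n$ and $w_J^n$ as stochastic integrals with deterministic, piecewise-constant, operator-valued integrands, use the It\^o isometry to turn the mean-square error into a deterministic sum of Hilbert--Schmidt norms, and then estimate that sum term by term by \eqref{eq:rl2l2} of Lemma \ref{lem:s2}.

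First I would unroll the recursions. With $r(s)=(1+s)^{-1}$, the one-step form of \eqref{Eq:3a} is $w^n=r(\tau A)w^{n-1}+r(\tau A)\Delta W^n$, so, since $w^0=0$,
$$
w^n=\sum_{k=1}^{n}r^{\,n-k+1}(\tau A)\,\Delta W^k=\int_0^{t_n}\Phi_n(s)\,\dd W(s),\qquad
\Phi_n(s):=r^{\,n-k+1}(\tau A)\ \ \text{for }s\in(t_{k-1},t_k],
$$
and likewise, from \eqref{eq:multires} and $w_J^0=0$, $w_J^n=\int_0^{t_n}\Phi_{J,n}(s)\,\dd W(s)$ with $\Phi_{J,n}(s):=r^{\,n-k+1}(\tau A_J)P_J$ on $(t_{k-1},t_k]$. (The integrands are square-integrable, so the integrals are well defined: $\Phi_n(s)Q^{\frac12}$ is Hilbert--Schmidt because $\|A^{\frac{\beta-1}{2}}Q^{\frac12}\|_{\HS}<\infty$ by \eqref{eq:HSassumption} and $r^{\,j}(\tau A)A^{\frac{1-\beta}{2}}$ is bounded for $j\ge1$, and $\Phi_{J,n}(s)Q^{\frac12}$ is Hilbert--Schmidt because $P_J$ has finite rank.) Subtracting, applying the It\^o isometry, and reindexing by $j=n-k+1$ gives
$$
\bbE\|w_J^n-w^n\|^2
=\int_0^{t_n}\bigl\|\bigl(\Phi_{J,n}(s)-\Phi_n(s)\bigr)Q^{\frac12}\bigr\|_{\HS}^2\,\dd s
=\tau\sum_{j=1}^{n}\bigl\|\bigl(r^{\,j}(\tau A_J)P_J-r^{\,j}(\tau A)\bigr)Q^{\frac12}\bigr\|_{\HS}^2 .
$$

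Next I would fix an orthonormal basis $\{f_l\}_{l\ge1}$ of $H$, expand each Hilbert--Schmidt norm, and interchange the (nonnegative) sums by Tonelli's theorem, obtaining
$$
\bbE\|w_J^n-w^n\|^2
=\sum_{l\ge1}\ \tau\sum_{j=1}^{n}\bigl\|r^{\,j}(\tau A_J)P_J\,(Q^{\frac12}f_l)-r^{\,j}(\tau A)\,(Q^{\frac12}f_l)\bigr\|^2 .
$$
Since $n\le N$ and the constant in \eqref{eq:rl2l2} is independent of $T$, applying \eqref{eq:rl2l2} with $v=Q^{\frac12}f_l$ (legitimate for each $l$ by \eqref{eq:HSassumption}) bounds the inner sum by $C^2\,2^{-2J\beta}\,\|Q^{\frac12}f_l\|_{\beta-1}^2=C^2\,2^{-2J\beta}\,\|A^{\frac{\beta-1}{2}}Q^{\frac12}f_l\|^2$. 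Summing over $l$ recombines the right-hand side into $C^2\,2^{-2J\beta}\,\|A^{\frac{\beta-1}{2}}Q^{\frac12}\|_{\HS}^2$, and taking square roots yields the assertion; note the bound is uniform in $n$.

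The unrolling and the It\^o isometry are routine; the one place where care is genuinely needed is the Hilbert--Schmidt bookkeeping — justifying the interchange of summations and checking that applying the single-vector estimate \eqref{eq:rl2l2} termwise and resumming reproduces exactly the norm $\|A^{\frac{\beta-1}{2}}Q^{\frac12}\|_{\HS}$ of the statement, and, in particular, that uniformity in $n$ is inherited from the $T$-independence of the constant in Lemma \ref{lem:s2}. I do not expect any substantial obstacle beyond this.
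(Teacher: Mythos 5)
Your proposal is correct and follows essentially the same route as the paper: unroll the recursions into a stochastic integral of a piecewise-constant operator-valued integrand, apply It\^o's isometry to reduce to $\tau\sum_{j=1}^{n}\|[r^{j}(\tau A_J)P_J-r^{j}(\tau A)]Q^{\frac12}\|_{\HS}^2$, and then bound this via \eqref{eq:rl2l2} applied to $Q^{\frac12}e_l$ for an orthonormal basis. The only difference is that you spell out the well-posedness of the stochastic integrals and the Tonelli interchange, which the paper leaves implicit.
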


\begin{proof}
Let $t_k=k\tau$, $k=0,...,n$. By \eqref{eq:multires}, \eqref{Eq:3a},
and induction,
%\begin{align*}
%w^n=\sum_{k=1}^n\int_{t_{k-1}}^{t_k}r^{n-k+1}(\tau A)\,\dd W(s), \quad
%w_J^n=\sum_{k=1}^n\int_{t_{k-1}}^{t_k}r^{n-k+1}(\tau A_J)P_J\,\dd W(s),
%\end{align*}
%whence
$$
w_J^n-w^n=\sum_{k=1}^n\int_{t_{k-1}}^{t_k}
\big[r^{n-k+1}(\tau A_J)P_J-r^{n-k+1}(\tau A)\big]\,\dd W(s),
$$
whence, by It\^o's isometry, we get
\begin{align*}
\bbE [\| w_J^n - w^n\|^2]
&=\sum_{k=1}^n\int_{t_{k-1}}^{t_k}
\big\|\big[r^{n-k+1}(\tau A_J)P_J-r^{n-k+1}(\tau A)\big]
Q^{\frac12}\big\|^2_{\HS}\,\dd s\\
&=\sum_{k=1}^n \tau
\big\|\big[r^{k}(\tau A_J)P_J-r^{k}(\tau A)\big]
Q^{\frac12}\big\|^2_{\HS}.
\end{align*}
Let $\{e_l\}_{l=1}^\infty$ be an orthonormal basis of $H$. Then, using
Lemma \ref{lem:s2}, we obtain
\begin{align*}
\bbE [\| w_J^n - w^n\|^2]
&=\sum_{l=1}^\infty \sum_{k=1}^n\tau
\|[r^k(\tau A_J)P_J-r^k(\tau A)]Q^{\frac12}e_l\|^2 \\
&\le C\sum_{l=1}^\infty 2^{-2J\beta} \|Q^{\frac{1}{2}}e_l\|_{\beta-1}^2
= C2^{-2J\beta}\|A^{\frac{\beta-1}{2}}Q^{\frac{1}{2}}\|_{\HS}^2.
\end{align*}
\end{proof}

%--------------------------------------------------------------------
\section{Pure time discretization} \label{sec:time}

In the proofs below we will often make use of
the following well-known facts about the analytic semigroup
$\ee^{-tA}$, namely
\begin{align}\label{eq:anal1}
\|A^{\alpha}\ee^{-tA}\|&\le {C}{t^{-\alpha}},\quad\alpha\ge 0,\ t>0, \\
\label{eq:anal2}
\|(\ee^{-tA}-I)A^{-\alpha}\|&\le Ct^{\alpha},\quad 0\le \alpha\le 1,\ t\ge 0,
\end{align}
for some $C=C(\alpha)$, see, for example, \cite[Chapter II, Theorem
6.4]{Pazy}. Also, by a simple energy argument we may prove
\begin{equation}\label{eq:anal3}
\int_0^t\|A^{\frac12}\ee^{-sA}v\|^2\,\dd s\le \frac12\|v\|^2,\quad v\in H,\ t\ge 0.
\end{equation}
We quote the following existence, uniqueness and stability result from
\cite[Lemma 3.1]{KLL}. For the mild, and other solution concepts we
refer to \cite[Chapters 6 and 7]{DapZab}.

\begin{lemma}\label{stab}
If $\| A^{\frac{\beta -1}{2}} Q^{\frac12} \|_{\HS}<\infty$ for some $\beta\ge
0$, $u_0\in L_2(\Omega,H)$, and \eqref{Lip} holds, then there is a
unique mild solution $\{u(t)\}_{t\ge 0}$ of \eqref{eq:1} with
$\sup_{t\in[0,T]}\bbE\|u(t)\|^2\le K$, where
$K=K(u_0,T,L_f)$.
\end{lemma}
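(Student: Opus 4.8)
The plan is to obtain Lemma~\ref{stab} by a standard Banach fixed point argument for the map
$\Phi(u)(t):=\ee^{-tA}u_0+\int_0^t\ee^{-(t-s)A}f(u(s))\,\dd s+w(t)$,
with $w$ the stochastic convolution \eqref{eq:03}, on a complete space of predictable $H$-valued processes, and then to extract the a priori bound by Gronwall's lemma. The only point requiring real care is the control of the stochastic convolution, so I would do that first: by It\^o's isometry $\bbE\|w(t)\|^2=\int_0^t\|\ee^{-sA}Q^{\frac12}\|_{\HS}^2\,\dd s$, and factoring $\ee^{-sA}Q^{\frac12}=(A^{\frac{1-\beta}{2}}\ee^{-sA})\,(A^{\frac{\beta-1}{2}}Q^{\frac12})$ gives $\|\ee^{-sA}Q^{\frac12}\|_{\HS}\le\|A^{\frac{1-\beta}{2}}\ee^{-sA}\|\,\|A^{\frac{\beta-1}{2}}Q^{\frac12}\|_{\HS}$. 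For $0\le\beta<1$ the analytic smoothing \eqref{eq:anal1} bounds $\|A^{\frac{1-\beta}{2}}\ee^{-sA}\|\le Cs^{-\frac{1-\beta}{2}}$ with exponent $<\tfrac12<1$, so the integral converges; for $\beta\ge1$ the operator $A^{\frac{1-\beta}{2}}$ is bounded (as $\lambda_k\ge\lambda_1>0$) and $\|\ee^{-sA}\|\le1$, so the integrand is uniformly bounded. In either case $\sup_{t\in[0,T]}\bbE\|w(t)\|^2\le C(T)\,\|A^{\frac{\beta-1}{2}}Q^{\frac12}\|_{\HS}^2<\infty$, using assumption \eqref{eq:HSassumption}.

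Next I would set up the fixed point. Let $\mathcal X$ be the space of equivalence classes of predictable $H$-valued processes $u$ on $[0,T]$ with $\|u\|_{\mathcal X}^2:=\sup_{t\in[0,T]}\bbE\|u(t)\|^2<\infty$, a Banach space. One checks $\Phi(\mathcal X)\subseteq\mathcal X$: the first term is bounded by $\bbE\|u_0\|^2$; the last by the estimate above; and for the Bochner-integral term the globally Lipschitz hypothesis \eqref{Lip} gives $\|f(u(s))\|\le\|f(0)\|+L_f\|u(s)\|$, so with $\|\ee^{-(t-s)A}\|\le1$ and the Cauchy--Schwarz inequality, $\bbE\big\|\int_0^t\ee^{-(t-s)A}f(u(s))\,\dd s\big\|^2\le T\int_0^t\bbE\|f(u(s))\|^2\,\dd s<\infty$; predictability is preserved since $s\mapsto\ee^{-(t-s)A}$ is strongly continuous and $f$ is continuous. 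The same computation applied to a difference yields, for $u_1,u_2\in\mathcal X$, $\bbE\|\Phi(u_1)(t)-\Phi(u_2)(t)\|^2\le TL_f^2\int_0^t\bbE\|u_1(s)-u_2(s)\|^2\,\dd s$, and iterating gives $\bbE\|\Phi^k(u_1)(t)-\Phi^k(u_2)(t)\|^2\le\frac{(TL_f^2t)^k}{k!}\|u_1-u_2\|_{\mathcal X}^2$, so some power of $\Phi$ is a strict contraction on $\mathcal X$. Hence $\Phi$ has a unique fixed point $u\in\mathcal X$, which is by definition the mild solution of \eqref{eq:1} satisfying \eqref{eq:01}.

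Finally, for the stability estimate I would take the fixed point $u$, use $\bbE\|u(t)\|^2\le 3\bbE\|u_0\|^2+3T\int_0^t\bbE\|f(u(s))\|^2\,\dd s+3\bbE\|w(t)\|^2$, insert $\bbE\|f(u(s))\|^2\le2\|f(0)\|^2+2L_f^2\bbE\|u(s)\|^2$, and arrive at $\bbE\|u(t)\|^2\le C_1+C_2\int_0^t\bbE\|u(s)\|^2\,\dd s$ with $C_1$ depending on $\bbE\|u_0\|^2$, $\|f(0)\|$, $T$ and $\|A^{\frac{\beta-1}{2}}Q^{\frac12}\|_{\HS}$, and $C_2=6TL_f^2$; Gronwall's lemma then gives $\sup_{t\in[0,T]}\bbE\|u(t)\|^2\le C_1\ee^{C_2T}=:K$. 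The main obstacle is really just the first step---deriving finiteness and uniform boundedness of $\bbE\|w(t)\|^2$ from the Hilbert--Schmidt condition \eqref{eq:HSassumption}, which is exactly where the analytic semigroup smoothing is needed when $\beta<1$; once that is in place, the existence, uniqueness and the bound follow from the routine Picard/Gronwall machinery for semilinear equations with globally Lipschitz nonlinearity, and one may alternatively just invoke \cite[Lemma 3.1]{KLL} or \cite[Chapters 6--7]{DapZab}.
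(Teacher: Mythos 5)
The paper does not prove this lemma at all; it simply quotes it from \cite[Lemma 3.1]{KLL}, so your proposal is compared against the standard argument rather than against an in-paper proof. Your overall strategy (Picard iteration on the space of predictable processes with the $\sup_t \bbE\|\cdot\|^2$ norm, followed by Gronwall) is exactly the classical route and the fixed-point and Gronwall parts are correct, including the $\frac{(TL_f^2 t)^k}{k!}$ iteration showing that a power of $\Phi$ is a contraction.

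There is, however, a genuine gap in your very first step, precisely at the borderline case $\beta=0$, which the lemma's hypothesis ``for some $\beta\ge 0$'' explicitly includes. You bound $\|\ee^{-sA}Q^{\frac12}\|_{\HS}\le \|A^{\frac{1-\beta}{2}}\ee^{-sA}\|\,\|A^{\frac{\beta-1}{2}}Q^{\frac12}\|_{\HS}\le C s^{-\frac{1-\beta}{2}}\|A^{\frac{\beta-1}{2}}Q^{\frac12}\|_{\HS}$ and claim the exponent is $<\tfrac12$; for $\beta=0$ it equals $\tfrac12$, so after squaring the integrand behaves like $s^{-1}$ and $\int_0^t s^{-1}\,\dd s$ diverges. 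The pointwise operator-norm smoothing \eqref{eq:anal1} is therefore not enough here; you need the \emph{time-integrated} smoothing estimate \eqref{eq:anal3}, applied termwise to an orthonormal basis:
\begin{equation*}
\int_0^t\big\|\ee^{-sA}Q^{\frac12}\big\|_{\HS}^2\,\dd s
=\sum_{k}\int_0^t\big\|A^{\frac{1-\beta}{2}}\ee^{-sA}\,A^{\frac{\beta-1}{2}}Q^{\frac12}e_k\big\|^2\,\dd s
\le C\sum_{k}\big\|A^{\frac{\beta-1}{2}}Q^{\frac12}e_k\big\|^2 ,
\end{equation*}
where for $0\le\beta\le 1$ the bound $\int_0^t\|A^{\frac{1-\beta}{2}}\ee^{-sA}v\|^2\,\dd s\le C\|v\|^2$ follows from the spectral representation ($\int_0^t\lambda^{1-\beta}\ee^{-2\lambda s}\,\dd s\le \tfrac12\lambda^{-\beta}\le C$), i.e.\ by the same mechanism as \eqref{eq:anal3}; this is exactly how the paper handles the analogous term $I_2$ in the proof of Theorem \ref{Holdersconv}. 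With this repair the rest of your argument goes through unchanged. A minor further remark: your constant $K$ inevitably depends also on $\|A^{\frac{\beta-1}{2}}Q^{\frac12}\|_{\HS}$ and $\|f(0)\|$, a dependence the lemma's statement suppresses.
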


Concerning the temporal regularity of the stochastic convolution we
have the following theorem.

\begin{theorem}\label{Holdersconv}
  Let $\|A^{-\eta}Q^{\frac{1}{2} }\|_{\HS}<\infty$ for some $\eta \in [0,
  \frac{1}{2}]$. Then the stochastic convolution $w(t):=
  \int_0^t\ee^{-(t-\sigma)A}\,\dd W(\sigma)$ is mean square H\"older
  continuous on $[0,\infty)$ with H\"older constant $C=C(\eta)$ and
  H\"older exponent $\frac12-\eta$, i.e.,
\begin{equation*}
\left(\bbE \|w (t)-w (s)\|^2\right)^{\frac12}
\leq C|t-s|^{\frac12-\eta},\quad t,s\ge 0.
\end{equation*}
\end{theorem}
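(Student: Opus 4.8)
The plan is to estimate the increment $w(t)-w(s)$ for $0\le s\le t$ by writing it, via It\^o's isometry, directly in terms of the Hilbert--Schmidt norm of the semigroup difference, and then to split into the usual two contributions: the ``old noise'' $\int_0^s$ acted on by $\ee^{-(t-s)A}-I$, and the ``new noise'' $\int_s^t$. Concretely, one writes
\begin{align*}
w(t)-w(s)
&=\big(\ee^{-(t-s)A}-I\big)\int_0^s\ee^{-(s-\sigma)A}\,\dd W(\sigma)
 +\int_s^t\ee^{-(t-\sigma)A}\,\dd W(\sigma)
=: I_1+I_2,
\end{align*}
so that $\bbE\|w(t)-w(s)\|^2=\bbE\|I_1\|^2+\bbE\|I_2\|^2$ by independence of the increments (the two stochastic integrals are over disjoint time intervals). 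The goal is to bound each piece by $C|t-s|^{1-2\eta}$.

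First I would handle $I_2$. By It\^o's isometry,
\begin{align*}
\bbE\|I_2\|^2
=\int_s^t\|\ee^{-(t-\sigma)A}Q^{\frac12}\|_{\HS}^2\,\dd\sigma
=\int_s^t\|\ee^{-(t-\sigma)A}A^{\eta}\cdot A^{-\eta}Q^{\frac12}\|_{\HS}^2\,\dd\sigma
\le \|A^{-\eta}Q^{\frac12}\|_{\HS}^2\int_s^t\|A^{\eta}\ee^{-(t-\sigma)A}\|^2\,\dd\sigma,
\end{align*}
and by the smoothing estimate \eqref{eq:anal1} the last integral is bounded by $C\int_0^{t-s}\sigma^{-2\eta}\,\dd\sigma = C(1-2\eta)^{-1}(t-s)^{1-2\eta}$, which is exactly what we want when $\eta<\tfrac12$; the case $\eta=\tfrac12$ is handled by \eqref{eq:anal3} directly, giving the factor $\tfrac12\|A^{-1/2}Q^{1/2}\|_{\HS}^2$ and exponent $0$. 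Second, for $I_1$, again by It\^o's isometry and then inserting $I=A^{\eta}A^{-\eta}$ twice,
\begin{align*}
\bbE\|I_1\|^2
=\int_0^s\big\|(\ee^{-(t-s)A}-I)\ee^{-(s-\sigma)A}Q^{\frac12}\big\|_{\HS}^2\,\dd\sigma
\le\|A^{-\eta}Q^{\frac12}\|_{\HS}^2\int_0^s\big\|(\ee^{-(t-s)A}-I)A^{-\eta}\big\|^2\,\|A^{\eta}\ee^{-(s-\sigma)A}A^{\eta}\|^2\,\dd\sigma.
\end{align*}
Here one uses \eqref{eq:anal2} with exponent $2\eta\in[0,1]$ to get $\|(\ee^{-(t-s)A}-I)A^{-\eta}\|\le\|(\ee^{-(t-s)A}-I)A^{-2\eta}\|^{1/2}\|\cdot\|$... more cleanly: split the two factors of $A^{-\eta}$ so that one sits with $\ee^{-(t-s)A}-I$ to produce $(t-s)^{\eta}$ via \eqref{eq:anal2} (valid since $\eta\le\tfrac12\le 1$), and the other sits with $\ee^{-(s-\sigma)A}$ to produce $(s-\sigma)^{-\eta}$ via \eqref{eq:anal1}; the integral $\int_0^s(s-\sigma)^{-2\eta}\,\dd\sigma=(1-2\eta)^{-1}s^{1-2\eta}$ is finite but not uniformly bounded in $s$, so one should instead keep only one $A^{-\eta}$ with the difference and absorb the remaining smoothing into the square-integrability, i.e. estimate
\begin{align*}
\bbE\|I_1\|^2
\le\|(\ee^{-(t-s)A}-I)A^{-2\eta}\|^2\int_0^s\|A^{2\eta}\ee^{-(s-\sigma)A}Q^{\frac12}\|_{\HS}^2\,\dd\sigma,
\end{align*}
but $\|A^{2\eta}\ee^{-\cdot A}Q^{1/2}\|_{\HS}$ need not be integrable unless $2\eta<\tfrac12$. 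The cleaner route, which I would actually follow, is to bound
\begin{align*}
\bbE\|I_1\|^2
=\int_0^s\big\|(\ee^{-(t-s)A}-I)A^{-\eta}\cdot A^{\eta}\ee^{-(s-\sigma)A}\cdot A^{-\eta}Q^{\frac12}\cdot A^{\eta}\big\|^2\dots
\end{align*}
— i.e. keep $A^{-\eta}Q^{1/2}$ together and distribute the remaining $A^{\eta}\cdots A^{\eta}$ as $A^{\eta}$ with the difference (exponent $\eta\le 1$, giving $(t-s)^{\eta}$) and $A^{\eta}$ with $\ee^{-(s-\sigma)A}$ (giving $(s-\sigma)^{-\eta}$), landing on $\int_0^s(s-\sigma)^{-\eta}(t-s)^{2\eta}\,\dd\sigma$... which has a lingering $s$.

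Given these bookkeeping tensions, the approach I would commit to is the standard one used for mean-square H\"older continuity of stochastic convolutions (as in Da Prato--Zabczyk): write $w(t)-w(s)=I_1+I_2$ as above; for $I_2$ use It\^o isometry plus \eqref{eq:anal1} (or \eqref{eq:anal3} at the endpoint) to get $C|t-s|^{1-2\eta}$ directly; for $I_1$ use It\^o isometry, pull out $\|(\ee^{-(t-s)A}-I)A^{-\eta}\|\le C|t-s|^{\eta}$ by \eqref{eq:anal2} (legitimate since $\eta\le\tfrac12\le1$), and bound the remaining $\int_0^s\|A^{\eta}\ee^{-(s-\sigma)A}Q^{1/2}\|_{\HS}^2\,\dd\sigma$ — wait, this again needs care. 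The resolution: one only needs $|t-s|^{2(1/2-\eta)}=|t-s|^{1-2\eta}$, i.e.\ exponent $\tfrac12-\eta$ on $w$; so for $I_1$ split the bound on $(\ee^{-(t-s)A}-I)$ to use exponent $\tfrac12-\eta\in[0,\tfrac12]$ via \eqref{eq:anal2}, namely $\|(\ee^{-(t-s)A}-I)A^{-(1/2-\eta)}\|\le C|t-s|^{1/2-\eta}$, leaving $\int_0^s\|A^{1/2-\eta}\ee^{-(s-\sigma)A}\cdot A^{-\eta}Q^{1/2}\cdot A^{\eta}\|\dots$ — and now $A^{1/2-\eta+\eta}=A^{1/2}$ sits against $\ee^{-(s-\sigma)A}$, so by \eqref{eq:anal3}, $\int_0^s\|A^{1/2}\ee^{-(s-\sigma)A}(A^{-\eta}Q^{1/2})e_k\|^2\,\dd\sigma\le\tfrac12\|A^{-\eta}Q^{1/2}e_k\|^2$, summing to $\tfrac12\|A^{-\eta}Q^{1/2}\|_{\HS}^2$, uniformly in $s$. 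That closes $I_1$ with constant $C=C(\eta)$ and exponent $\tfrac12-\eta$.

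The main obstacle is precisely this juggling of fractional powers of $A$ so that every occurrence of $\ee^{-(s-\sigma)A}$ is paired with exactly $A^{1/2}$ (to invoke the uniform-in-$s$ energy bound \eqref{eq:anal3}) rather than a higher power that would make the time integral diverge or grow in $s$; choosing the exponent split $\tfrac12-\eta$ (not $\eta$) on the difference operator $\ee^{-(t-s)A}-I$ is the key trick that makes both $I_1$ and $I_2$ work uniformly. Everything else — It\^o's isometry, independence of increments, the elementary estimates \eqref{eq:anal1}, \eqref{eq:anal2}, \eqref{eq:anal3} — is routine. Finally I would remark that symmetry in $t,s$ and the case $t=s$ are trivial, so the stated inequality holds for all $t,s\ge0$.
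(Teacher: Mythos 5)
Your final committed argument is correct and is essentially the paper's own proof: the same splitting into new-noise and old-noise integrals, It\^o's isometry, the smoothing bound \eqref{eq:anal1} for the integral over $[s,t]$, and for the integral over $[0,s]$ the factorization $(\ee^{-(t-s)A}-I)A^{-(\frac12-\eta)}\cdot A^{\frac12}\ee^{-(s-\sigma)A}\cdot A^{-\eta}Q^{\frac12}$ combined with \eqref{eq:anal2} and the energy bound \eqref{eq:anal3}. The exploratory detours before you settle on the exponent split $\tfrac12-\eta$ could simply be deleted; the resolution you land on is exactly the one the paper uses.
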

\begin{proof}
  For $\eta=\frac12$ the result follows from Lemma \ref{stab}. Let
  $\eta \in [0, \frac{1}{2})$ and, without loss of generality, let
  $s < t$.  By independence of the increments of $W$,
  \begin{equation*}
\begin{split}
\ens{w(t)-w(s)}
& =\Ens{\int_s^t \ee^{-(t-\sigma)A}\, \dd W(\sigma)}
\\ &\quad +\Ens{\int_0^s \ee^{-(t-\sigma)A}-\ee^{-(s-\sigma)A}\, \dd W(\sigma)}
= I_1+I_2.
\end{split}
\end{equation*}
From It\^o's isometry and \eqref{eq:anal1} it follows that
\begin{equation*}
\begin{split}
I_1&
=\Ens{\int_s^t A^{\eta}\ee^{-(t-\sigma)A}A^{-\eta}\,\dd W(\sigma)}
=\int_s^t\shs{ A^{\eta}\ee^{-(t-\sigma)A}A^{-\eta}Q^{\frac{1}{2} }}\,\dd\sigma\\
& \leq C\int_s^t(t-\sigma)^{-2\eta}\shs{A^{-\eta}Q^{\frac{1}{2} }}\, \dd\sigma
\leq \frac{C}{1-2\eta}(t-s)^{1-2\eta}\shs{A^{-\eta}Q^{\frac{1}{2} }}.
\end{split}
\end{equation*}
Finally, let $\{e_k\}_{k=1}^\infty$ be an orthonormal basis of $H$. Then,
by \eqref{eq:anal2} and \eqref{eq:anal3},
\begin{equation}\label{i2}
\begin{split}
I_2&=\int_0^s\shs{(\ee^{-(t-\sigma)A}-\ee^{-(s-\sigma)A})Q^{\frac{1}{2} }} \,\dd\sigma\\
&=\sum_{k=1}^\infty
\int_0^s\|(\ee^{-(t-s)A}-I)A^{-(\frac{1}{2} -\eta)}A^{\frac{1}{2} -\eta}
  \ee^{-(s-\sigma)A}Q^{\frac{1}{2} }e_k\|^2 \,\dd\sigma \\
 & \le C(t-s)^{1-2\eta} \sum_{k=1}^\infty
\int_0^s\|A^{\frac{1}{2} }\ee^{-(s-\sigma)A}A^{-\eta}Q^{\frac{1}{2} }e_k\|^2 \,\dd\sigma\\
 & \le C (t-s)^{1-2\eta} \sum_{k=1}^\infty
 \|A^{-\eta}Q^{\frac{1}{2} }e_k\|=C (t-s)^{1-2\eta} \|A^{-\eta}Q^{\frac{1}{2} }\|^2_{\HS}.
\end{split}
\end{equation}
\end{proof}

The next result shows that the time regularity of $w$ transfers to the solution of the
semilinear problem.

\begin{theorem}\label{Thm:hol}
  If $u_0\in L_2(\Omega,\dot{H}^{\beta})$ and
  $\|A^{\frac{\beta-1}{2}}Q^{\frac12}\|_{\HS}<\infty$ for some $0\le
  \beta < 1$, then there is
  $C=C(T,u_0,\beta)$ such that the mild solution $u$ of \eqref{eq:1} satisfies
  \begin{equation*}
  \left(\bbE\|u(t)-u(s)\|^2\right)^{\frac12}
  \le C|t-s|^{\frac{\beta}{2}},\quad t,s\in [0,T].
  \end{equation*}
\end{theorem}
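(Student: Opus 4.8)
The plan is to use the mild solution formula \eqref{eq:01} and split the difference $u(t)-u(s)$, for $0\le s<t\le T$, into three pieces according to the three terms in \eqref{eq:01}:
\begin{align*}
u(t)-u(s)
&= (\ee^{-tA}-\ee^{-sA})u_0
+ \int_0^t\ee^{-(t-\sigma)A}f(u(\sigma))\,\dd\sigma - \int_0^s\ee^{-(s-\sigma)A}f(u(\sigma))\,\dd\sigma
\\ &\quad
+ (w(t)-w(s)) =: J_1+J_2+J_3.
\end{align*}
I would bound each of the three terms in $L_2(\Omega,H)$ separately and then combine by the triangle inequality.

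For $J_1$, I would write $(\ee^{-tA}-\ee^{-sA})u_0 = (\ee^{-(t-s)A}-I)A^{-\beta/2}\ee^{-sA}A^{\beta/2}u_0$ and use \eqref{eq:anal2} (with $\alpha=\beta/2\le 1$, valid since $\beta<1$) together with the contractivity of $\ee^{-sA}$ to get $\|J_1\|\le C(t-s)^{\beta/2}\|A^{\beta/2}u_0\|$; taking the mean square and using $u_0\in L_2(\Omega,\dot H^\beta)$ gives $\|J_1\|_{L_2(\Omega,H)}\le C(t-s)^{\beta/2}\|u_0\|_{L_2(\Omega,\dot H^\beta)}$. For $J_3$, I would invoke Theorem \ref{Holdersconv}: under \eqref{eq:HSassumption} with $0\le\beta<1$ we have $\|A^{-\eta}Q^{1/2}\|_{\HS}<\infty$ with $\eta=(1-\beta)/2\in(0,\tfrac12]$, and that theorem yields H\"older exponent $\tfrac12-\eta=\tfrac\beta2$, so $\|J_3\|_{L_2(\Omega,H)}\le C(t-s)^{\beta/2}$.

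For $J_2$, the standard trick is to split further into the ``near'' and ``overlapping'' parts:
\begin{align*}
J_2 &= \int_s^t\ee^{-(t-\sigma)A}f(u(\sigma))\,\dd\sigma
+ \int_0^s\big(\ee^{-(t-\sigma)A}-\ee^{-(s-\sigma)A}\big)f(u(\sigma))\,\dd\sigma =: J_2'+J_2''.
\end{align*}
For $J_2'$, since $\ee^{-(t-\sigma)A}$ is a contraction and $\|f(u(\sigma))\|\le \|f(0)\|+L_f\|u(\sigma)\|$, the mean-square bound on $\sup_\sigma\bbE\|u(\sigma)\|^2$ from Lemma \ref{stab} gives $\|J_2'\|_{L_2(\Omega,H)}\le C(t-s)\le C_T(t-s)^{\beta/2}$ on $[0,T]$. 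For $J_2''$, I would write $\ee^{-(t-\sigma)A}-\ee^{-(s-\sigma)A}=(\ee^{-(t-s)A}-I)A^{-\beta/2}\cdot A^{\beta/2}\ee^{-(s-\sigma)A}$ and use \eqref{eq:anal2} for the first factor and \eqref{eq:anal1} (with $\alpha=\beta/2$) for the second, so the integrand is bounded by $C(t-s)^{\beta/2}(s-\sigma)^{-\beta/2}\|f(u(\sigma))\|$; since $\beta/2<\tfrac12<1$ the singularity $(s-\sigma)^{-\beta/2}$ is integrable on $(0,s)$, and combining with Lemma \ref{stab} (after taking $L_2(\Omega)$-norms, e.g.\ via Minkowski's integral inequality) yields $\|J_2''\|_{L_2(\Omega,H)}\le C_T(t-s)^{\beta/2}$.

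The main obstacle — though a mild one — is the handling of $J_2''$: one must be careful to pull the $L_2(\Omega)$-norm inside the time integral (Minkowski) before estimating, and to check that the power $(s-\sigma)^{-\beta/2}$ is genuinely integrable, which is exactly where the restriction $\beta<1$ is used. Everything else reduces to the semigroup bounds \eqref{eq:anal1}–\eqref{eq:anal2}, Theorem \ref{Holdersconv}, and the a priori bound of Lemma \ref{stab}; the constant $C$ then depends on $T$, $u_0$ (through $\|u_0\|_{L_2(\Omega,\dot H^\beta)}$ and $\|u_0\|_{L_2(\Omega,H)}$), and $\beta$, as claimed.
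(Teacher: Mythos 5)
Your proposal is correct and follows essentially the same route as the paper: the same four-way decomposition (initial-data term, the two deterministic convolution pieces, and $w(t)-w(s)$), with the same tools \eqref{eq:anal1}, \eqref{eq:anal2}, Theorem \ref{Holdersconv} with $\eta=\frac{1-\beta}{2}$, and Lemma \ref{stab}. The only cosmetic difference is that for the overlapping integral the paper applies Cauchy--Schwarz before the operator bounds (getting the integrable singularity $(s-\sigma)^{-\beta}$, which is where it uses $\beta<1$) whereas you use Minkowski's integral inequality; both work.
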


\begin{proof}
Let $T>0$ and $0\le s< t\le T$. Then, by \eqref{eq:01},
\begin{align*}
u(t)-u(s)&=(\ee^{-tA}-\ee^{-sA})u_0
+\int_s^t\ee^{-(t-r)A}f(u(r))\,\dd r\\
& \quad +\int_0^s(\ee^{-(t-r)A}-\ee^{-(s-r)A})f(u(r))\,\dd r
+w(t)-w(s).
\end{align*}
In a standard way, for $0\le \beta\le 2$, we have
$
\bbE\|(\ee^{-tA}-\ee^{-sA})u_0\|^2\le C|t-s|^{\beta}\bbE\|u_0\|_{\beta}^2.
$
Using that $f$ is Lipschitz and hence $\|f(u)\|\le C(1+\|u\|)$, the
norm boundedness of the semigroup $\ee^{-tA}$, and Lemma \ref{stab},
we have that
$$
\bbE\Big\|\int_s^t\ee^{-(t-r)A}f(u(r))\,\dd r\Big\|^2
\le C|t-s|^2 \Big(1+\sup_{r\in [0,T]}\bbE\|u(r)\|^2\Big)\le C|t-s|^2.
$$
For $0\le \beta<1$, by Lemma \ref{stab}, \eqref{eq:anal1} and
\eqref{eq:anal2}, it follows that
\begin{align*}
&\bbE\Big\|\int_0^s(\ee^{-(t-r)A}-\ee^{-(s-r)A})f(u(r))\,\dd r\Big\|^2 \\
&\qquad \le s \bbE \int_0^s\|(\ee^{-(t-r)A}-\ee^{-(s-r)A})f(u(r))\|^2\,\dd r \\
&\qquad
\le C s\Big(1+\sup_{r\in [0,T]}\bbE\|u(r)\|^2\Big)
\int_0^s\|\ee^{-(t-r)A}-\ee^{-(s-r)A}\|^2\,\dd r\\
&\qquad \le
C s\int_0^s\|A^{\frac{\beta}{2}}\ee^{-(s-r)A}
(\ee^{-(t-s)A}-I)A^{-\frac{\beta}{2}}\|^2\,\dd r
\le C |t-s|^{\beta}s^{2-\beta}\le C |t-s|^{\beta}.
\end{align*}
Finally, by Theorem \ref{Holdersconv} with $\eta =-\frac{\beta-1}{2}$, we have
$
\bbE\|w(t)-w(s)\|^2\le C |t-s|^{\beta},
$
which finishes the proof.
\end{proof}
In order to analyze the order of the backward Euler time-stepping
\eqref{Eq:2}  we quote the following deterministic error
estimates, where $r(\tau A)=(I+\tau
A)^{-1}$.

\begin{lemma}
The following error estimates hold for $t_n=n\tau>0$.
\begin{align}\label{Eq:tmd0}
\|[\ee^{-n\tau A}-r^n(\tau A)]v\|&\le C\tau^{\frac\beta2}\|v\|_{\beta},
\quad 0\le \beta \le 2,\\
\label{Eq:tmd01}
\|[\ee^{-n\tau A}-r^n(\tau A)]v\|&\le C \tau t_n^{-1}\|v\|,\\
\label{Eq:tmd}
\sum_{k=1}^n\tau\left\|[r^k(\tau A)-\ee^{-k\tau A}]v\right\|^2
&\le C \tau^\beta \|v\|^2_{\beta-1},\quad 0\le \beta\le 2.
\end{align}
\end{lemma}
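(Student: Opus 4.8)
The plan is to diagonalize. Since $A$ is self-adjoint and positive definite with eigenpairs $(\lambda_k,e_k)$, the operators $\ee^{-n\tau A}$ and $r^n(\tau A)=(I+\tau A)^{-n}$ act by multiplication by $\ee^{-n\tau\lambda_k}$ and $(1+\tau\lambda_k)^{-n}$ on the $k$-th mode, so all three estimates reduce, by Parseval, to scalar inequalities for
\[
g_n(x):=\ee^{-nx}-(1+x)^{-n},\qquad x>0,\ n\ge1 .
\]
Concretely, it suffices to establish
\[
|g_n(x)|\le C\min(x,1),\qquad |g_n(x)|\le C/n,\qquad \sum_{k=1}^n g_k(x)^2\le C\,x^{\beta-1}\quad(0\le\beta\le2),
\]
for all $x>0$, $n\ge1$, and then read off the operator estimates. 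Indeed $\|[\ee^{-n\tau A}-r^n(\tau A)]v\|^2=\sum_k g_n(\tau\lambda_k)^2(v,e_k)^2$; combined with $\min(\tau\lambda_k,1)\le(\tau\lambda_k)^{\beta/2}$ and $\sum_k(\tau\lambda_k)^\beta(v,e_k)^2=\tau^\beta\|v\|_\beta^2$ this gives \eqref{Eq:tmd0}, while $|g_n(x)|\le C/n=C\tau/t_n$ gives \eqref{Eq:tmd01}; finally $\sum_{k=1}^n\tau\,\|[r^k(\tau A)-\ee^{-k\tau A}]v\|^2=\sum_l(\tau\sum_{k=1}^ng_k(\tau\lambda_l)^2)(v,e_l)^2\le C\tau^\beta\sum_l\lambda_l^{\beta-1}(v,e_l)^2=C\tau^\beta\|v\|_{\beta-1}^2$ gives \eqref{Eq:tmd}.

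For the scalar bounds I would use two elementary facts: the identity $(1+x)^{-1}-\ee^{-x}=(1+x)^{-1}\int_0^xt\ee^{-t}\,\dd t$, whence $0\le(1+x)^{-1}-\ee^{-x}\le\min(x^2/2,1)$ and in particular $\ee^{-x}\le(1+x)^{-1}$; and $(1+x)^{-1}\le\ee^{-x/2}$ for $0\le x\le2$ (equivalently $\ee^{x/2}\le1+x$ there). From the telescoping identity $g_n(x)=(\ee^{-x}-(1+x)^{-1})\sum_{j=0}^{n-1}\ee^{-(n-1-j)x}(1+x)^{-j}$ and $\ee^{-x}\le(1+x)^{-1}$, each of the $n$ summands is at most $(1+x)^{-(n-1)}$, so $|g_n(x)|\le n(1+x)^{-(n-1)}\min(x^2/2,1)$. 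For $0<x\le1$ this is $\le C\,nx^2\ee^{-nx/2}$, which I rewrite as $Cx\cdot(nx)\ee^{-nx/2}\le Cx$ and as $(C/n)(nx)^2\ee^{-nx/2}\le C/n$, using $\sup_{y\ge0}y^p\ee^{-y/2}<\infty$; for $x>1$, crudely $|g_n(x)|\le\ee^{-nx}+(1+x)^{-n}\le\ee^{-n}+2^{-n}\le C\min(1,1/n)$. This yields the first two scalar bounds. For the third: when $0<x\le1$, the telescoping bound gives $g_k(x)^2\le C\,k^2x^4\ee^{-kx}$, and $\sum_{k\ge1}k^2\ee^{-kx}\le Cx^{-3}$ there, so $\sum_{k=1}^ng_k(x)^2\le Cx\le Cx^{\beta-1}$ (as $x\le x^{\beta-1}$ for $0<x\le1$, $\beta\le2$); when $x>1$, $g_k(x)^2\le(1+x)^{-2k}$, so $\sum_{k\ge1}g_k(x)^2\le(x^2+2x)^{-1}\le x^{-2}\le x^{\beta-1}$.

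The hard part is the refinement behind the first two scalar bounds: the naive estimate $|g_n(x)|\le n\,|(1+x)^{-1}-\ee^{-x}|\le n\min(x^2/2,1)$ grows with $n$ and is useless, so one must retain the geometric decay $(1+x)^{-(n-1)}\sim\ee^{-cnx}$ produced by the telescoping sum and balance it against the factors $n$ and $x^2$ through boundedness of $y\mapsto y^p\ee^{-y/2}$. The split at $x=1$ (that is, $\tau\lambda_k$ small versus large) then disposes of the high-frequency part by brute force, and the remainder — the Parseval bookkeeping and the elementary geometric series $\sum_{k\ge1}k^2\ee^{-kx}$ and $\sum_{k\ge1}(1+x)^{-2k}$ — is routine. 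These estimates are classical; cf.\ \cite{Tho}.
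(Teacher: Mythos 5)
Your proposal is correct. All the scalar estimates check out: the identity $(1+x)^{-1}-\ee^{-x}=(1+x)^{-1}\int_0^x t\ee^{-t}\,\dd t$ is valid and gives both $0\le(1+x)^{-1}-\ee^{-x}\le\min(x^2/2,1)$ and $\ee^{-x}\le(1+x)^{-1}$; the telescoping factorization $a^n-b^n=(a-b)\sum_{j}a^{n-1-j}b^j$ combined with $(1+x)^{-1}\le\ee^{-x/2}$ on $[0,1]$ yields $|g_n(x)|\le Cnx^2\ee^{-nx/2}$ there, and the balancing via $\sup_y y^p\ee^{-y/2}<\infty$ then gives both $|g_n(x)|\le Cx\le C x^{\beta/2}$ and $|g_n(x)|\le C/n$; the geometric-series bounds $\sum_k k^2\ee^{-kx}\le Cx^{-3}$ for $x\le1$ and $\sum_k(1+x)^{-2k}\le x^{-2}$ for $x>1$ are also right, and $x\le x^{\beta-1}$ (resp.\ $x^{-1}\le x^{\beta-1}$) in the respective regimes closes \eqref{Eq:tmd}. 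However, your route differs from the paper's: the paper does not prove the lemma at all but cites \cite[Chapter 7]{Tho} for \eqref{Eq:tmd0}--\eqref{Eq:tmd01} and the argument for (2.17) in \cite[Lemma 2.8]{yan} for \eqref{Eq:tmd}. Your spectral reduction is legitimate here because $A$ is self-adjoint, positive definite, with an orthonormal eigenbasis, as set up in the introduction, and it has the virtue of being completely self-contained and elementary; it is essentially the classical argument in the self-adjoint case. What the citation route buys instead is generality: the proofs in \cite{Tho} are based on resolvent/contour-integral representations for analytic semigroups and so cover nonselfadjoint $A$ and more general rational approximations $r$, whereas your Parseval argument is tied to the spectral theorem. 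Since the paper only ever applies the lemma to the self-adjoint $A$ (and, in Lemma \ref{lem:s2}, to the likewise self-adjoint $A_J$), your proof fully suffices for the purposes of this paper.
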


\begin{proof}
  Estimates \eqref{Eq:tmd0} and \eqref{Eq:tmd01} are shown in, for
  example, \cite[Chapter 7]{Tho}. Estimate \eqref{Eq:tmd} can be
  proved in a similar way as (2.17) in
  \cite[Lemma 2.8]{yan}.
\end{proof}

\begin{theorem}\label{Thm:tme}
  If $u_0\in L_2(\Omega,\dot{H}^{\beta})$ and
  $\|A^{\frac{\beta-1}{2}}Q^{\frac12}\|_{\HS}<\infty$ for some $0\le
  \beta < 1$, then there is $C=C(T, u_0,\beta)$ such that for
  $0<\tau<\frac{1}{2L_f}$, the solutions $u$ of \eqref{eq:01} and $u^n$
  of \eqref{Eq:2} satisfy
$$
(\bbE\|u(t_n)-u^n\|^2)^{\frac12}\le C \tau^{\beta/2},\quad t_n=n\tau\in [0,T].
$$
\end{theorem}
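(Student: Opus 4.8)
The plan is to compare the mild solution $u$ of \eqref{eq:01} with the backward Euler iterates $u^n$ of \eqref{Eq:2} by writing $u^n$ in a discrete variation-of-constants form and matching it termwise against \eqref{eq:01}. First I would observe that iterating \eqref{Eq:2} gives
\[
  u^n = r^n(\tau A)u_0 + \tau\sum_{k=1}^n r^{n-k+1}(\tau A)f(u^k)
        + \sum_{k=1}^n r^{n-k+1}(\tau A)\,\Delta W^k,
\]
and then I would split the error $e^n := u(t_n)-u^n$ into the initial-data term, the deterministic (nonlinear drift) term, and the stochastic term, treating each by subtracting and adding the corresponding continuous-time quantity at the grid points $t_k$. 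For the initial data, \eqref{Eq:tmd0} immediately gives $(\bbE\|[\ee^{-t_nA}-r^n(\tau A)]u_0\|^2)^{1/2}\le C\tau^{\beta/2}(\bbE\|u_0\|_\beta^2)^{1/2}$. For the stochastic term, I would compare $\int_0^{t_n}\ee^{-(t_n-s)A}\,\dd W(s)$ with $\sum_{k=1}^n r^{n-k+1}(\tau A)\Delta W^k$: writing the latter as $\sum_k\int_{t_{k-1}}^{t_k} r^{n-k+1}(\tau A)\,\dd W(s)$, It\^o's isometry reduces the mean-square error to a sum of Hilbert--Schmidt norms, which is controlled by splitting $\ee^{-(t_n-s)A}-r^{n-k+1}(\tau A) = (\ee^{-(t_n-s)A}-\ee^{-(t_n-t_k)A}) + (\ee^{-(t_n-t_k)A}-r^{n-k+1}(\tau A))$; the first difference is handled by \eqref{eq:anal1}, \eqref{eq:anal2} and the smoothing estimate \eqref{eq:anal3}, and the second by the deterministic estimate \eqref{Eq:tmd} together with the Hilbert--Schmidt assumption $\|A^{(\beta-1)/2}Q^{1/2}\|_{\HS}<\infty$, giving a bound of order $\tau^{\beta/2}$.

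The remaining, and most delicate, piece is the nonlinear drift term
\[
  \int_0^{t_n}\ee^{-(t_n-s)A}f(u(s))\,\dd s
   - \tau\sum_{k=1}^n r^{n-k+1}(\tau A)f(u^k).
\]
I would insert the intermediate quantity $\tau\sum_{k=1}^n \ee^{-(t_n-t_k)A}f(u(t_k))$ and treat three differences: (a) the quadrature error $\int_0^{t_n}\ee^{-(t_n-s)A}f(u(s))\,\dd s - \tau\sum_k \ee^{-(t_n-t_k)A}f(u(t_k))$, which I would bound using the temporal H\"older regularity of $u$ from Theorem~\ref{Thm:hol} (giving exponent $\beta/2$) together with the smoothing/integrability of the semigroup near $s=t_n$; (b) the operator-difference term $\tau\sum_k[\ee^{-(t_n-t_k)A}-r^{n-k+1}(\tau A)]f(u(t_k))$, controlled by \eqref{Eq:tmd01} (the $\tau t_n^{-1}$ estimate) summed against the uniform bound $\sup_r\bbE\|f(u(r))\|^2<\infty$ from the linear growth of $f$ and Lemma~\ref{stab}; and (c) the term $\tau\sum_k r^{n-k+1}(\tau A)[f(u(t_k))-f(u^k)]$, which by \eqref{Lip} and $\|r^j(\tau A)\|\le1$ is bounded by $\tau L_f\sum_{k=1}^n(\bbE\|e^k\|^2)^{1/2}$.

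Collecting everything yields a discrete Gr\"onwall inequality of the form
\[
  (\bbE\|e^n\|^2)^{1/2} \le C\tau^{\beta/2} + \tau L_f\sum_{k=1}^{n}(\bbE\|e^k\|^2)^{1/2},
\]
and since $\tau L_f<\tfrac12$ the coefficient on the diagonal term can be absorbed, after which the standard discrete Gr\"onwall lemma gives $(\bbE\|e^n\|^2)^{1/2}\le C\ee^{cT}\tau^{\beta/2}$, uniformly for $t_n\in[0,T]$, as claimed. The main obstacle I anticipate is step (a): the quadrature error for $\int_0^{t_n}\ee^{-(t_n-s)A}f(u(s))\,\dd s$ has an integrable singularity of the semigroup only in a mild (non-fractional-power) sense, so I must be careful to use just the boundedness of $\ee^{-tA}$ on $H$ on the last subinterval and the H\"older bound on $s\mapsto f(u(s))$ (via $f$ Lipschitz and Theorem~\ref{Thm:hol}) on the rest, rather than any $A^\alpha$-smoothing that would require spatial regularity of $f(u)$ which is not available; this is precisely the place where the restriction $\beta<1$ enters through Theorem~\ref{Thm:hol}.
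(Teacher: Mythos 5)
Your proposal is correct and follows essentially the same route as the paper: the same splitting into initial-data, stochastic, and nonlinear-drift contributions, the same intermediate insertions of the semigroup at grid points, the same key estimates (\eqref{Eq:tmd0}, \eqref{Eq:tmd}, \eqref{Eq:tmd01}, Theorem~\ref{Thm:hol}, Lemma~\ref{stab}), and the same absorption of the implicit $k=n$ Lipschitz term before the discrete Gronwall lemma. The only cosmetic difference is that the paper anchors the intermediate semigroup at the left endpoint $t_{k-1}$ (so that $\ee^{-t_{n-k+1}A}$ matches $r^{n-k+1}(\tau A)$ exactly), whereas you use $t_k$; you should adjust this so that \eqref{Eq:tmd} and \eqref{Eq:tmd01} apply directly.
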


\begin{proof}
We have, with $e^n:=u(t_n)-u^n$,
\begin{align*}
e^n&=[\ee^{-t_nA}-r^n(\tau A)]u_0
+\sum_{k=1}^n
\int_{t_{k-1}}^{t_k} [\ee^{-(t_n-s)A}-r^{n-k+1}(\tau A)]\,\dd W(s)\\
&\quad +\sum_{k=1}^n
\int_{t_{k-1}}^{t_k}
\ee^{-(t_n-s)A}f(u(s))-r^{n-k+1}(\tau A)f(u_{k})\,\dd s
=e_1+e_2+e_3.
\end{align*}
The error $e_1$ is
easily bounded, using \eqref{Eq:tmd0}, as
\begin{equation*}
\bbE\|e_1\|^2\le C \tau^\beta \bbE\|u_0\|^2_{\beta},\quad 0\le \beta \le 2.
\end{equation*}
The contribution of $e_2$ is the linear stochastic error. First, we
decompose $e_2$ as
\begin{align*}
e_2&=\sum_{k=1}^n\int_{t_{k-1}}^{t_k}
[\ee^{-t_{n-k+1}A}-r^{n-k+1}(\tau A)]\,\dd W(s)\\
&\quad +\sum_{k=1}^n\int_{t_{k-1}}^{t_k}
[\ee^{-(t_n-s)A}-\ee^{-t_{n-k+1}A}]\,\dd W(s)=e_{21}+e_{22}.
\end{align*}
Let $\{f_l\}_{l=1}^\infty$ be an ONB of $H$. By It\^o's isometry, the
independence of the increments of $W$ and \eqref{Eq:tmd},
\begin{align*}
\bbE\|e_{21}\|^2&= \sum_{k=1}^n\tau \|[r^{k}(\tau A)-\ee^{-k\tau A}]Q^{\frac12}\|^2_{\HS}\le
=\sum_{l=1}^\infty\sum_{k=1}^n\tau \|[r^{k}(\tau A)-\ee^{-k\tau A}]Q^{\frac12}f_l\|^2\\
&\le C\sum_{l=1}^\infty\tau^{\beta}\|Q^{\frac12}f_l\|^2_{\beta-1}
=C\tau^\beta\|A^{\frac{\beta-1}{2}}Q^{\frac{1}{2}}\|_{\HS}^2,\quad 0\le \beta\le 2.
\end{align*}
The term $e_{22}$ can be bounded using a similar argument as in \eqref{i2} by
\begin{equation*}
\bbE\|e_{22}\|^2\le C\tau^{\beta}\|A^{\frac{\beta-1}{2}}Q^{\frac12}\|^2_{\HS},\quad 0\le \beta\le 2.
\end{equation*}
Next, we can further decompose $e_3$ as
\begin{align*}
e_3&=\sum_{k=1}^n\int_{t_{k-1}}^{t_k}r^{n-k+1}(\tau A)[f(u(t_{k}))-f(u_{k})]\,\dd s\\
&\quad +\sum_{k=1}^n\int_{t_{k-1}}^{t_k}[\ee^{-t_{n-k+1}A}-r^{n-k+1}(\tau A)]f(u(t_{k}))\,\dd s\\
&\quad +\sum_{k=1}^n\int_{t_{k-1}}^{t_k}\ee^{-t_{n-k+1}A}[f(u(s))-f(u(t_{k}))]\,\dd s\\
&\quad +\sum_{k=1}^n\int_{t_{k-1}}^{t_k}[\ee^{-(t_n-s)A}-\ee^{-t_{n-k+1}A}]f(u(s))\,\dd s
=e_{31}+e_{32}+e_{33}+e_{34}.
\end{align*}
By the stability of $r^n(\tau A)$ and the Lipschitz condition on $f$,
we have
\begin{align*}
\bbE\|e_{31}\|^2
\le 2L_f^2\tau^2 \bbE\|e^n\|^2
+2L_f^2\tau^2n\sum_{k=1}^{n-1}\bbE\|e^k\|^2
\le 2L_f^2\tau^2 \bbE\|e^n\|^2
+C\tau \sum_{k=1}^{n-1}\bbE\|e^k\|^2.
\end{align*}
By \eqref{Eq:tmd01} and Lemma \ref{stab}, with $\tau t_{n-k+1}^{-1}=(n-k+1)^{-1}=l^{-1}$,
\begin{align*}
\bbE\|e_{32}\|^2
&\le C\bbE\Big(\sum_{k=1}^n\tau \tau t_{n-k+1}^{-1}\|f(u(t_k))\|\Big)^2
\le C \tau^2 \sum_{l=1}^n\frac{1}{l^{2}}\sum_{k=1}^n\bbE\|f(u(t_k))\|^2\\
&\le C \tau^2 \sum_{k=1}^n(1+\bbE\|u(t_k)\|^2)\le C \tau t_n\le C\tau.
\end{align*}
Furthermore, by Theorem \ref{Thm:hol},
\begin{equation*}
\bbE\|e_{33}\|^2\le t_n \sum_{k=1}^n
\int_{t_{k-1}}^{t_k}\bbE\|f(u(s))-f(u(t_k))\|^2\,\dd s
\le Ct_n^2 \tau^\beta \le C \tau^\beta,~0\le \beta< 1.
\end{equation*}
To estimate $e_{34}$ we have, using again that $t_{n-k+1}=t_n-t_{k-1}$
and Lemma \ref{stab},
\begin{align*}
\bbE\|e_{34}\|^2
&= \bbE\Big(\sum_{k=1}^n\int_{t_{k-1}}^{t_k}
\|[A^{\frac{\beta}{2}}\ee^{-(t_n-s)A}(I-\ee^{-(s-t_{k-1})A})]
A^{-\frac{\beta}{2}}f(u(s))\|\,\dd s\Big)^2\\
&\le C t_n\sum_{k=1}^n
\int_{t_{k-1}}^{t_k}(t_n-s)^{-\beta}\tau^\beta
\bbE\|f(u(s))\|^2\,\dd s\le C\tau^\beta,\quad 0\le \beta< 1.
\end{align*}
Putting the pieces together, we have
\begin{align*}
\bbE\|e^n\|^2
\le C\tau^\beta
+2L_f^2\tau^2 \bbE\|e^n\|^2
+C\tau \sum_{k=1}^{n-1}\bbE\|e^k\|^2,\quad 0\le\beta< 1.
\end{align*}
Finally, if $\tau<\frac{1}{2L_f}$, then by the discrete Gronwall lemma,
\begin{equation*}
\bbE\|e^n\|^2\le C\tau^\beta \ee^{Ct_n}\le C\tau^\beta,~0\le\beta< 1,
\end{equation*}
and the theorem is established.
\end{proof}
%\begin{rem}
%With a slightly more elaborate estimate in \eqref{Eq:bet}, one can include $\beta=1$ and obtain an estimate of $C\tau|\ln\tau|$ and hence an overall rate of convergence of $\tau^{\frac12}|\ln\tau|^\frac{1}{2}$. Alternatively, if one would assume, in addition, that the initial data $u_0$ also satisfies $\bbE\|u_0\|^p<\infty$ for some $p>2$ (not just $p=2$), then again in \eqref{Eq:bet}, one can use H\"older's inequality and arrive at the estimate $C\tau$ for $\beta=1$ and thus an overall rate of $\tau^{\frac12}$.
%\end{rem}

%-------------------------------------
\section{Error analysis for the nonlinear random problem}
\label{sec:nonlinear}
%-------------------------------------
In this section we bound the term $\bbE \big[\|\bar{v}^n-v^n \|^2\big]$ in
\eqref{eq:07}. We use the global Lipschitz condition \eqref{Lip}.

\begin{lemma}  \label{lem:s1}
Assume that $\tau L_f\le \frac12$.  Then, with $C=2L_fTe^{2L_fT}$,
\begin{align*}
\max_{1\le n\le N}
\Big(\bbE \big[ \| \bar{v}^n-v^n \|^2 \big]\Big)^{\frac12}
\leq C
\max_{1\le n\le N}
\Big(\bbE \big[ \| w_J^n - w^n\|^2 \big]\Big)^{\frac12} .
\end{align*}
\end{lemma}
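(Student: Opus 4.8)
The plan is to work with the error $e^n := \bar v^n - v^n$, reduce everything to a one-step stability estimate, and close it with a discrete Gronwall/telescoping argument. Subtracting \eqref{Eq:3b} from \eqref{Eq:5} and applying $r(\tau A) = (I+\tau A)^{-1}$ gives, with $e^0=0$,
\[
e^n = r(\tau A)\,e^{n-1} + \tau\, r(\tau A)\bigl(f(\bar v^n + w_J^n) - f(v^n + w^n)\bigr).
\]
Since $A$ is positive self-adjoint, $r(\tau A)$ is a contraction on $H$, and the Lipschitz bound \eqref{Lip} together with $\bar v^n + w_J^n - (v^n + w^n) = e^n + (w_J^n - w^n)$ yields the pathwise estimate
\[
\norm{e^n}\le \norm{e^{n-1}} + \tau L_f\bigl(\norm{e^n} + \norm{w_J^n - w^n}\bigr).
\]

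Next I would absorb the implicit term $\tau L_f\norm{e^n}$ into the left-hand side; this is exactly where $\tau L_f\le\tfrac12$ is used, so that $1-\tau L_f$ stays bounded away from $0$. On $[0,\tfrac12]$ one has $\tfrac{1}{1-\tau L_f}\le \ee^{2\tau L_f}$ and $\tfrac{\tau L_f}{1-\tau L_f}\le 2\tau L_f$, hence
\[
\norm{e^n}\le \ee^{2\tau L_f}\norm{e^{n-1}} + 2\tau L_f\,\norm{w_J^n - w^n}.
\]
Because $\norm[L_2(\Omega,H)]{\cdot}$ is a norm, the same recursion holds after applying it throughout, with $\norm[L_2(\Omega,H)]{w_J^n-w^n} = (\bbE\|w_J^n - w^n\|^2)^{1/2}$ on the right. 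Iterating from $e^0=0$ then gives
\[
\norm[L_2(\Omega,H)]{e^n}\le 2\tau L_f\sum_{k=1}^{n}\ee^{2(n-k)\tau L_f}\,\norm[L_2(\Omega,H)]{w_J^k - w^k}\le 2\tau L_f\,\ee^{2L_fT}\sum_{k=1}^{n}\norm[L_2(\Omega,H)]{w_J^k - w^k},
\]
using $2(n-k)\tau L_f\le 2N\tau L_f = 2L_fT$. Bounding the sum by $n\max_{1\le k\le N}\norm[L_2(\Omega,H)]{w_J^k - w^k}$ and using $n\tau\le N\tau = T$ once more turns the prefactor $2\tau L_f\,n$ into $2L_fT$, so that
\[
\norm[L_2(\Omega,H)]{e^n}\le 2L_fT\,\ee^{2L_fT}\max_{1\le k\le N}\norm[L_2(\Omega,H)]{w_J^k - w^k},\qquad 1\le n\le N;
\]
taking the maximum over $n$ yields the claim with $C = 2L_fT\,\ee^{2L_fT}$.

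There is no genuinely hard step here: the argument is a one-step stability bound followed by telescoping. The only points requiring care are (i) absorbing the implicit nonlinear contribution $\tau L_f\norm{e^n}$, which forces the hypothesis $\tau L_f\le\tfrac12$ and the estimates for $\tfrac{1}{1-\tau L_f}$ and $\tfrac{\tau L_f}{1-\tau L_f}$, and (ii) the bookkeeping of constants, so that the geometric factor in the telescoped sum collapses to $\ee^{2L_fT}$ while the prefactor $2\tau L_f\,n$ collapses to $2L_fT$ via $n\tau\le T$; this is what reproduces precisely the constant stated in the lemma.
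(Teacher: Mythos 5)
Your proof is correct and follows essentially the same route as the paper: both derive the one-step error equation from \eqref{Eq:3b} and \eqref{Eq:5}, use the contractivity of $(I+\tau A)^{-1}$ and the Lipschitz bound \eqref{Lip}, absorb the implicit term via $\tau L_f\le\tfrac12$, and close with a discrete Gronwall/telescoping argument yielding the same constant $2L_fT\ee^{2L_fT}$. The only (immaterial) difference is that you absorb and iterate step by step with explicit constants, whereas the paper first writes $e^n$ as a resolvent sum by induction and then invokes the discrete Gronwall lemma.
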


\begin{proof}
Let $e^n:= \bar{v}^n - v^n$.
Then, we have by \eqref{Eq:3b} and \eqref{Eq:5}
\begin{align*}
e^n + \tau Ae^n
= \tau \big(f(\bar v^n + w_J^n)-f(v^n + w^n)\big) + e^{n-1}.
\end{align*}
Since $e^0=0$, we get by induction
\begin{align*}
e^n
= \tau \sum_{j=1}^{n} (I + \tau A)^{-(n+1-j)}
\big(f(\bar v^j + w_J^j)-f(v^j + w^j)\big).
\end{align*}
In view of the global Lipschitz condition \eqref{Lip},
this results in the estimate
\begin{align*}
\| e^n\|
&\leq L_f\tau \sum_{j=1}^{n}
\|(I + \tau A)^{-(n+1-j)}\|\,\| \bar v^j + w_J^j -v^j - w^j \|
\\
&\leq L_f \tau \sum\limits_{j=1}^{n} \big(\| w_J^j - w^j\|+\| e^j\| \big),
\end{align*}
since $\| (I+\tau A)^{-1}\| \leq 1$.
Thus, we obtain
\begin{align*}
\| e^n\|
\leq (1-L_f\tau)^{-1}L_f \tau\Big(
  \sum_{j=1}^{n} \| w_J^j - w^j\|
+ \sum_{j=1}^{n-1} \| e^j\|
\Big).
\end{align*}
With $L_f\tau \le \frac{1}{2}$ we complete the proof by the standard
discrete Gronwall lemma.
\end{proof}

%------------------------------------------------------------------

%%%%%%%%%%%%%%%%%%%%%%%%%%%%%%%%%%%%%%%%%%%%%%%%%%%%%%%%%%%%%%%

\end{document}